\newtheorem{thm}{Theorem}
\newtheorem{cor}[thm]{Corollary} 
\newtheorem{prop}[thm]{Proposition}
\newtheorem{lem}[thm]{Lemma}
\theoremstyle{definition}
\newtheorem{defn}[thm]{Definition}
\newtheorem{note}[thm]{Notation}
\newtheorem{conv}[thm]{Convention}
\providecommand{\customgenericname}{}
\newcommand{\newcustomtheorem}[2]{%
  \newenvironment{#1}[1]
  {%
   \renewcommand\customgenericname{#2}%
   \renewcommand\theinnercustomgeneric{##1}%
   \innercustomgeneric
  }
  {\endinnercustomgeneric}
}
\theoremstyle{remark}
\newtheorem{rem}[thm]{Remark}
\newtheorem*{ack}{Acknowledgment}
\def\XXint#1#2#3{{\setbox0=\hbox{$#1{#2#3}{\int}$ }
\vcenter{\hbox{$#2#3$ }}\kern-.6\wd0}}
\newcommand{\Z}{\mathbb{Z}}
\newcommand{\R}{\mathbb{R}} 
\newcommand{\C}{\mathbb{C}}
\newcommand{\N}{\mathbb{N}}
\newcommand{\dbar}{\overline{\partial}}
\newcommand{\Ds}{\mathscr{D}}
\newcommand{\Ec}{\mathcal{E}}
\newcommand{\Es}{\mathscr{E}}
\newcommand{\Fc}{\mathcal{F}}
\newcommand{\Fs}{\mathscr{F}}
\newcommand{\Hc}{\mathcal{H}}
\newcommand{\Pc}{\mathcal{P}}
\newcommand{\Sc}{\mathcal{S}}
\newcommand{\Ss}{\mathscr{S}}
\newcommand{\Tc}{\mathcal{T}}
\newcommand{\Uc}{\mathcal{U}}
\newcommand{\Xs}{\mathscr{X}}
\newcommand{\Ys}{\mathscr{Y}}
\newcommand{\id}{\mathrm{id}}
\newcommand{\eps}{\varepsilon}
\newcommand{\dist}{\operatorname{dist}}
\newcommand{\supp}{\operatorname{supp}}
\newcommand{\Span}{\operatorname{Span}}
\newcommand{\Coorvec}[1]{\frac\partial{\partial#1}}
\newcommand{\Vol}{\operatorname{Vol}}
\title[Sobolev estimates on product domains]{On $\overline\partial$ Homotopy Formulae for Product Domains: Nijenhuis-Woolf's Formulae and Optimal Sobolev Estimates}
\subjclass[2020]{32A26 (primary) 32W05 and 46E35 (secondary)} 
\keywords{Cauchy-Riemann equations, integral representation, product of pseudoconvex domains, negative Sobolev spaces}
\author[]{Liding Yao} 
\address{Department of Mathematics,
	The Ohio State University, Columbus, OH 43210} 
\email{yao.1015@osu.edu}
\author[]{Yuan Zhang} 
\address{Department of Mathematical Sciences,
	Purdue University Fort Wayne, Fort Wayne, IN 46805
} 
\email{zhangyu@pfw.edu}
\pgfplotsset{compat=1.18}
\begin{document}

\begin{abstract}
    We construct   homotopy formulae $f=\overline\partial\mathcal H_qf+\mathcal H_{q+1}\overline\partial f$ for $(0, q)$ forms on the product domain $\Omega_1\times\dots\times\Omega_m$, where each $\Omega_j$ is either a bounded Lipschitz domain in $\C^1$, a bounded strongly pseudoconvex domain with $C^2$ boundary, or a smooth convex domain of finite type. Such homotopy operators $\Hc_q$ yield  solutions to the $\overline\partial$ equation with optimal Sobolev regularity $W^{k,p}\to W^{k,p}$ simultaneously for all $k\in\Z$ and $1<p<\infty$.
\end{abstract}

\maketitle
\section{Introduction}

The goal of this paper is to prove the following:
\begin{thm}\label{Thm::DbarThm}
      Let $\Omega_j\subset\C^{n_j}$ be a bounded Lipschitz domain for each $j=1,\dots, m$, with    $m\ge 1$, such that one of the following holds.
    \begin{itemize}
        \item $\Omega_j\subset\C$ is a planar domain (i.e. $n_j=1$).
        \item $\Omega_j$ is strongly pseudoconvex with $C^2$ boundary or strongly $\C$-linearly convex with $C^{1,1}$ boundary.
        \item $\Omega_j$ is a  smooth convex domain of finite type.
    \end{itemize}
    Let $\Omega:=\Omega_1\times\dots\times\Omega_m$ and $n: = \sum_{j=1}^m n_j$.  
    Then there exist linear operators $\Pc=\Pc^\Omega:\Ss'(\Omega)\to\Ss'(\Omega)$ and $\Hc_q=\Hc_q^\Omega:\Ss'(\Omega;\wedge^{0,q})\to\Ss'(\Omega;\wedge^{0,q-1})$ for $1\le q\le n$, such that
    \begin{enumerate}[(i)]
        \item\label{Item::DbarThm::HT} $f=\Pc f+\Hc_1\dbar f$ for all $f\in\Ss'(\Omega)$; and $f=\dbar\Hc_qf+\Hc_{q+1}\dbar f$ for all $f\in\Ss'(\Omega;\wedge^{0,q})$.
        \item\label{Item::DbarThm::SobBdd} We have Sobolev estimates $\Pc:W^{k,p}(\Omega)\to W^{k,p}(\Omega)$ and $\Hc_q:W^{k,p}(\Omega;\wedge^{0,q})\to W^{k,p}(\Omega;\wedge^{0,q-1})$ for all $k\in\Z$ and $1<p<\infty$.
    \end{enumerate}
\end{thm}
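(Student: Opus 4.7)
The plan is a Nijenhuis--Woolf-style tensor-product construction: combine single-factor homotopy operators $\Pc^j,\Hc^j_\bullet$ on each $\Omega_j$ into operators on the product via a telescoping formula, then transfer the Sobolev estimates factor by factor. The single-factor ingredient I need is a pair $(\Pc^j,\Hc^j_\bullet)$ satisfying
\[
\id = \Pc^j + \Hc^j_1\dbar_j \text{ on functions},\qquad \id = \dbar_j\Hc^j_q + \Hc^j_{q+1}\dbar_j \text{ on }(0,q)_j\text{-forms},
\]
with $W^{k,p}(\Omega_j)\to W^{k,p}(\Omega_j)$ bounds for all $k\in\Z$ and $1<p<\infty$. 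I would produce these from the classical integral representations --- the Cauchy transform plus a Bergman-type projection for planar Lipschitz $\Omega_j$; a Henkin--Leray-type kernel for strongly pseudoconvex $C^2$ or strongly $\C$-linearly convex $C^{1,1}$ boundaries; a Hefer/Diederich--Fischer--Forn\ae ss-type kernel for smooth convex finite type --- and secure the $k<0$ estimates by duality applied to a suitably regularized transpose.

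Each $\Pc^j,\Hc^j_q$ then acts parametrically on $\Omega$ in the $z_j$-variables alone. Because the defining kernel depends only on $z_j$, the parametric operator commutes with $\partial_{z_k},\partial_{\overline z_k}$ for $k\ne j$; combined with Minkowski's inequality (and duality for $k<0$) this lifts the single-factor bounds to $W^{k,p}(\Omega)\to W^{k,p}(\Omega)$ parametric estimates.

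With the parametric operators in hand I then define the product operators
\[
\Pc := \Pc^1\Pc^2\cdots\Pc^m,\qquad \Hc_q := \sum_{j=1}^m \Pc^1\Pc^2\cdots\Pc^{j-1}\,\Hc^j_q,
\]
with empty products of $\Pc^i$'s understood as $\id$. Three structural facts drive the verification: the $\Pc^j$'s pairwise commute; each satisfies $\Pc^j\dbar_j = 0 = \dbar_j\Pc^j$ on the bidegrees where it appears (since $\Pc^j$ vanishes on forms with a $d\overline z_j$-component and outputs something holomorphic in $z_j$); and cross-factor operators commute. Expanding $\dbar\Hc_q + \Hc_{q+1}\dbar$ with $\dbar = \dbar_1+\cdots+\dbar_m$ bidegree by bidegree collapses telescopically --- on a form of multi-bidegree $(q_1,\ldots,q_m)$ only the term indexed by the smallest $j_0$ with $q_{j_0}>0$ survives in $\Hc_q$ --- yielding $\id - \Pc$ on scalars and $\id$ on $(0,q)$-forms with $q\ge 1$. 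The Sobolev bounds for $\Pc$ and $\Hc_q$ then follow immediately from the parametric step.

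The principal obstacle is the first stage: establishing the $W^{k,p}$ bounds for $k<0$ on each admissible factor, especially in the strongly pseudoconvex, strongly $\C$-linearly convex, and convex finite type categories. Classical integral representations yield essentially optimal positive-order regularity, but pushing them down to the negative Sobolev scale requires delicate kernel estimates and/or boundary-correction terms, which I expect to be the technical heart of the argument. The Nijenhuis--Woolf combinatorics in the third stage, while structurally clean, do demand careful bidegree and sign tracking when the factor dimensions $n_j$ exceed one.
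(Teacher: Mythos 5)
Your overall strategy is the same as the paper's: a Nijenhuis--Woolf telescoping product formula $\Pc=\Pc^1\cdots\Pc^m$, $\Hc=\sum_j\Pc^1\cdots\Pc^{j-1}\Hc^j$ built from single-factor homotopy operators, with the Sobolev bounds transferred factor by factor. Two of your structural claims need repair, though. First, the cross-factor operators do \emph{not} commute: for $j'\neq j$ one has the anticommutation $\dbar_{j'}\Hc^j=-\Hc^j\dbar_{j'}$ (because $\Hc^j$ shifts the form degree in the $z_j$-variables by one), and it is precisely this sign that makes the cross terms cancel in $\dbar\Hc^j+\Hc^j\dbar=\id-\Pc^j$ on the product; if the operators commuted as you assert, those terms would double rather than vanish and the telescoping would fail. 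Second, the parametric lifting for $k<0$ is not a routine duality argument. For $k\ge0$ your Minkowski-type argument works (given bounds at all intermediate orders on the factor), but for $k<0$ an element of $W^{k,p}(\Omega)$ is a sum $\sum D^\alpha g_\alpha$ with genuinely mixed multi-indices $\alpha$, and to apply $T\otimes\id$ you must first split $f$ into a piece carrying only $z_j$-derivatives and a piece carrying only transverse derivatives. That is the Fubini decomposition $W^{-k,p}(U\times V)=L^p(U;W^{-k,p}(V))+L^p(V;W^{-k,p}(U))$, which for bounded Lipschitz domains requires a tensor-product universal (Rychkov) extension operator; the paper isolates this as its key Proposition on the Fubini property, and the naive adjoint of $T\otimes\id$ does not obviously act on the right dual space.

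The largest gap is the one you flag yourself: the single-factor $W^{k,p}$ bounds for $k<0$, including for the skew projection $P=\id-H_1\dbar$. Note that on a planar Lipschitz domain the genuine Bergman projection need not be $L^p$-bounded, so $P$ must be taken as $\id-H_1\dbar$ with $H_1$ the Cauchy transform precomposed with an extension operator; and in the strongly pseudoconvex and convex finite type cases the negative-order estimates are obtained in the literature not by dualizing a classical kernel but by building the kernels on top of Rychkov's extension operator (the paper imports these results and supplies the one missing piece, the boundedness of $P$ on convex domains of finite type, in its appendix). As written, "duality applied to a suitably regularized transpose" does not constitute a proof of this step.
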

Here $\Ss'(\Omega)$ is the space of distributions on $\Omega$ which admit extension to distributions on $\C^n$, $ W^{k,p}(\Omega)  $ is the Sobolev space on $\Omega$ with  $k\in\Z$ and $1<p<\infty$, and $\Ss'(\Omega;\wedge^{0,q})\  (\text{resp.}\  W^{k,p}(\Omega;\wedge^{0,q})) $  is the space of degree $(0, q)$ forms with coefficients in $\Ss'(\Omega) \  (\text{resp.}\  W^{k,p}(\Omega)  ) $. 
See Notation~\ref{Note::Space::Dist}, Definition ~\ref{Defn::Space::Sob} and Convention \ref{Cov:;spacef} for the precise definitions. We note that $(\Hc_q)_{q=1}^n$ 
do not possess  optimal H\"older estimates. See Remark~\ref{Rmk::Holder}. 

As an immediate consequence of Theorem~\ref{Thm::DbarThm} we obtain a solution operator  to the $\dbar$ equation on product domains for any $(0,q)$ forms with $q\ge1$, together with  the associated Sobolev estimate. Note that in view of a Kerzman-type example (see, e.g. \cite[Example 1]{ZhangSobolevProduct}) the Sobolev regularity is sharp.

\begin{cor}\label{Cor::dbar}
    Let  $\Omega\subset\C^n$ be given as in Theorem~\ref{Thm::DbarThm}. Let $1\le q\le n$. For every $(0,q)$-form $f$ on $\Omega$ whose coefficients are extendable distributions such that $\dbar f=0$ in the sense of distributions, there is a $(0,q-1)$ form $u$ on $\Omega$ whose coefficients are also extendable distributions, such that $\dbar u=f$.
    
    Moreover, for every $k\in\Z$ and $1<p<\infty$ there is a constant  $C>0$, such that if the coefficients of $f$ are in $W^{k,p}$, then we can choose $u$ whose coefficients are in $W^{k,p}$, with the estimate $$\|u\|_{W^{k,p}(\Omega;\wedge^{0,q-1})}\le C\|f\|_{W^{k,p}(\Omega;\wedge^{0,q})}.$$
\end{cor}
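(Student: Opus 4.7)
The plan is to deduce the corollary directly from Theorem~\ref{Thm::DbarThm} by setting $u := \Hc_q f$ and applying the homotopy formula together with the Sobolev bound for $\Hc_q$. The strategy amounts to little more than substituting the hypothesis $\dbar f = 0$ into the identity in part~\eqref{Item::DbarThm::HT}, so I do not expect any serious obstacle; the only thing to verify is that the relevant quantities are well-defined and lie in the claimed spaces.

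First I would invoke Theorem~\ref{Thm::DbarThm} to produce the operator $\Hc_q: \Ss'(\Omega;\wedge^{0,q}) \to \Ss'(\Omega;\wedge^{0,q-1})$. Since the coefficients of $f$ are assumed to be extendable distributions, $f$ lies in the domain of $\Hc_q$, so $u := \Hc_q f$ is a well-defined $(0,q-1)$ form whose coefficients are also extendable distributions. The homotopy identity in Theorem~\ref{Thm::DbarThm}\eqref{Item::DbarThm::HT} gives
\begin{equation*}
    f = \dbar \Hc_q f + \Hc_{q+1} \dbar f,
\end{equation*}
which, combined with the assumption $\dbar f = 0$, yields $f = \dbar u$ in the sense of distributions. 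This proves the existence statement.

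For the quantitative part, I would simply apply Theorem~\ref{Thm::DbarThm}\eqref{Item::DbarThm::SobBdd}: for any $k \in \Z$ and $1 < p < \infty$, the operator $\Hc_q$ is bounded from $W^{k,p}(\Omega;\wedge^{0,q})$ to $W^{k,p}(\Omega;\wedge^{0,q-1})$, so if the coefficients of $f$ are in $W^{k,p}$, then the coefficients of $u = \Hc_q f$ are automatically in $W^{k,p}$ with
\begin{equation*}
    \|u\|_{W^{k,p}(\Omega;\wedge^{0,q-1})} \le C\,\|f\|_{W^{k,p}(\Omega;\wedge^{0,q})},
\end{equation*}
where $C$ is the operator norm provided by Theorem~\ref{Thm::DbarThm}. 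Since the entire argument is a one-line consequence of the main theorem, the only conceptual step is to make sure the homotopy formula is applicable at the level of extendable distributions, which is built into the formulation of Theorem~\ref{Thm::DbarThm}; there is no real obstacle to overcome in this corollary.
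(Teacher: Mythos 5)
Your proposal is correct and matches the paper's argument, which likewise treats the corollary as an immediate consequence of Theorem~\ref{Thm::DbarThm}: take $u=\Hc_q f$, use the homotopy formula with $\dbar f=0$, and invoke the $W^{k,p}$ bound for $\Hc_q$. Nothing further is needed.
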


The $\dbar$ homotopy formulae play an essential role in  studying the  $\dbar $ problem and have been extensively developed of pseudoconvex domains with certain finite type conditions, using the $\dbar$-Neumann approach (e.g. \cite{GreinerStein1977,FeffermanKohn1988,Chang1989,ChangeNagelStein1992}) and the integral representation approach (e.g. \cite{LiebRange80,Range1990,DFFHolder,GongHolderSPsiCXC2}). We refer the readers to \cite{Range1990,ChenShawBook,LiebMichelBook} for more details. Product domains, owing to their particular structure, fail to have finite type, and  merely admit  Lipschitz boundary regularity. 

The study of the $\dbar$ problem on product domains  was initiated by the work of Henkin  \cite{Henkin}, who established  $L^\infty$ estimates for the $\dbar$ equation on the bidisk using an integral representation by the Cauchy kernel. Landucci  \cite{Landucci} later proved an analogous result   for the canonical solutions. Since the work of Chen-McNeal \cite{ChenMcNeal1}  on    $L^p$ estimates for product domains in $\mathbb C^2$, there has been much progress towards  the  optimal $L^p$ estimates  for $(0,1)$ forms on  Cartesian products of general planar domains. The special case $p=\infty$ (on planar domains) is also called the Kerzman's supnorm estimate problem, posted in \cite{Kerzman71}. The optimal $L^\infty$ estimate was first given by  Fassina-Pan \cite{FassinaPan} for $C^{n-1, \alpha}$ data. Later  Dong-Pan-Zhang \cite{DongPanZhang}   extended this result to   the canonical solutions for continuous data. Kerzman's problem was completely solved recently  by  Yuan  \cite{Yuan} on products of $C^2$ planar domains based on  \cite{DongPanZhang},  and Li \cite{Li} on products of   $C^{1,\alpha}$ planar domains independently. Subsequently Li-Long-Luo \cite{LiLongLuo} further relaxed the boundary regularity of each factor to Lipschitz.   
The   Sobolev  regularity of $\dbar$   was first  investigated by Chakrabarti-Shaw \cite{ChaShaw}  for   the canonical solutions with respect to $(0,1)$ data on  products of certain smooth pseudoconvex domains. In particular, the optimal  $W^{k, p}, k\ge 1$  regularity  was obtained   for    products of smooth planar domains  in $\mathbb C^2$ by Jin-Yuan    \cite{JinYuan}  and in $\mathbb C^n$  by Zhang \cite{ZhangSobolevProduct}. See also      \cite{JakobczakProductSPsiCX, DongLiTreuer, ChenMcNeal2, PanZhang} and the  references therein.   

 In comparison to those results, our theorem allows each factor $\Omega_j\subset\C^{n_j}$ to be non-planar (i.e. we allow $n_j>1$). Such product domains were previously studied in \cite{JakobczakProductSPsiCX,ChaShaw,ChenMcNeal2} for  $(0,1)$ forms  in certain special Sobolev spaces that are strictly smaller than the standard ones and involve a loss of derivatives. In the special case of planar product domains,  we obtain Sobolev estimates assuming that each factor $\Omega_j$ to be merely Lipschitz, which extends the $L^p\to L^p$ estimate from \cite{LiLongLuo}.
Meanwhile, we show the existence of $\dbar$-solutions on space of distributions, a result that is  novel even for polydisks. Previously the similar solvability on distributions (with large orders) were only known for strongly pseudoconvex domains by Shi-Yao \cite{ShiYaoCk} and Yao \cite{YaoSPsiCXC2} and for convex domains of finite type by Yao \cite{YaoConvexFiniteType}. Moreover, we derive the optimal estimates for general $(0,q)$ forms with all $q\ge 1$. 
To the authors' best knowledge, for the case $q\ge 2$, the only previously  established  result for optimal $\dbar$ estimate on product domains is the  $L^2\to L^2$ estimate, which follows directly from H\"ormander's classical $L^2$-theory. In particular, the $L^p$-boundedness for $p\neq 2$ remains  open for the canonical solutions 
on $(0,q)$ forms even on polydisk. 

Our construction of the homotopy  operators is inspired from Nijenhuis-Woolf's formulae in \cite[(2.2.2)]{NijenhuisWoolf} for products of planar domains, see Theorem \ref{Thm::ProdHT} and Remark~\ref{Rmk::ActualNWFormula}. For estimates we use the so-called Fubini decomposition of Sobolev spaces, see Proposition \ref{Prop::Fubini}. 

In fact, the proof yields   homotopy formulae and the corresponding operator estimates   on a considerably larger class of product domains, provided that  each factor domain admits its  homotopy formulae and regularity estimates. To be precise, the following is the conditional result:

\begin{thm}\label{Thm::HTGeneral}
  Let $\Omega_j\subset\C^{n_j}$ be a bounded Lipschitz domain for each $j=1,\dots, m$, with   $m\ge 1$. Suppose  there exist linear homotopy operators $H^{\Omega_j}_q: C^\infty(\overline{\Omega_j};\wedge^{0,q})\to\Ds'(\Omega_j;\wedge^{0,q-1})$ for $1\le q\le n_j $, such that the following homotopy formulae hold (we set $H_{n_j+1}^{\Omega_j}=0$):
   \begin{equation}\label{Eqn::HTGeneral::htf1}
                 f= \dbar H_q^{\Omega_j} f + H_{q+1}^{\Omega_j}\dbar  f  \ \ \  \text{for all} \ \ \ f\in C^\infty (\overline{\Omega_j};  \wedge^{0,q}),\qquad1\le q\le n_j.
    \end{equation}
    Let $\Omega:=\Omega_1\times\dots\times\Omega_m$ and $n: = \sum_{j=1}^m n_j$. Then
    \begin{enumerate}[(i)]
        \item\label{Item::HTGeneral::HT} there exist linear operators $\Hc_q=\Hc_q^\Omega: C^\infty(\overline{\Omega};\wedge^{0,q})\to\Ds'(\Omega;\wedge^{0,q-1})$ for $1\le q\le n$, such that
            \begin{equation}\label{Eqn::HTGeneral::htfp}
                f=\dbar\Hc_qf+\Hc_{q+1}\dbar f \ \ \text{ for all}\ \  f\in C^\infty(\overline{\Omega};\wedge^{0,q}).
            \end{equation}
    \end{enumerate}
    
Further,  set the skew Bergman projections $P^{\Omega_j}: C^\infty(\overline{\Omega_j})\to\Ds'(\Omega_j)$ for $1\le j\le m$ by $P^{\Omega_j}f:=f-H_1^{\Omega_j}\dbar f$ for $f\in C^\infty(\overline{\Omega_j})$,  and $\Pc=\Pc^\Omega:C^\infty(\overline{\Omega})\to\Ds'(\Omega)$ by $\Pc f:=f-\Hc_1^\Omega \dbar f$ for $f\in C^\infty(\overline\Omega)$.
    \begin{enumerate}[(i)]\setcounter{enumi}{1}
        \item\label{Item::HTGeneral::SobBdd} Suppose there exists some $k\in\Z$ and $1<p<\infty$, such that for $1\le j\le m$ and $1\le q\le n_j$, $P^{\Omega_j}$ and $H^{\Omega_j}_q$ are both defined and bounded
    \begin{equation}\label{Eqn::HTGeneral::est}
          P^{\Omega_j}: W^{l,p}(\Omega_j)\to W^{l,p}(\Omega_j); \quad H_q^{\Omega_j}:W^{l,p}(\Omega_j;\wedge^{0,q})\to W^{l,p}(\Omega_j;\wedge^{0,q-1})\qquad \text{for }l=0,k.
    \end{equation}
    Then          
         $\Pc$ and $\Hc_q$ ($1\le q\le n$) obtained in \ref{Item::HTGeneral::HT} admit Sobolev estimates $\Pc:W^{k,p}(\Omega)\to W^{k,p}(\Omega)$ and $\Hc_q:W^{k,p}(\Omega;\wedge^{0,q})\to W^{k,p}(\Omega;\wedge^{0,q-1})$ as well.
    \end{enumerate}
\end{thm}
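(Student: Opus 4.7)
I will proceed by induction on $m$, the number of factors. The base case $m=1$ is the hypothesis itself. For the inductive step, I write $\Omega=\Omega_1\times\Omega'$ with $\Omega':=\Omega_2\times\cdots\times\Omega_m$, invoke the inductive hypothesis on $\Omega'$ to obtain operators $\tilde\Hc_q:=\Hc_q^{\Omega'}$ and a skew Bergman projection $\tilde\Pc:=\Pc^{\Omega'}$ satisfying \eqref{Eqn::HTGeneral::htfp} on $\Omega'$, and then apply the two-factor product construction of Theorem~\ref{Thm::ProdHT} to the pair $(\Omega_1,\Omega')$. In this construction, $H^{\Omega_1}_q$ and $P^{\Omega_1}$ are used parametrically in $z'$ while $\tilde\Hc_q$ and $\tilde\Pc$ are used parametrically in $z_1$, yielding $\Hc_q^\Omega$ and thereby $\Pc^\Omega=I-\Hc_1^\Omega\dbar$.

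To verify the product homotopy identity \eqref{Eqn::HTGeneral::htfp}, I split $\dbar=\dbar_1+\dbar'$ and decompose each $(0,q)$ form $f$ by bidegree $(a,q-a)$ in $(z_1,z')$. On each bidegree piece, combining the factor homotopy formulae \eqref{Eqn::HTGeneral::htf1} for $\Omega_1$ and for $\Omega'$ (the latter from the inductive hypothesis) with the identity $P^{\Omega_1}=I-H_1^{\Omega_1}\dbar_1$ to absorb cross terms should yield the required identity after routine bookkeeping. The parametric interpretation---applying $H^{\Omega_1}_q$ to an element of $C^\infty(\overline\Omega;\wedge^{0,q})$ by viewing it as a smooth $C^\infty(\overline{\Omega_1};\wedge^{0,q})$-valued map on $\overline{\Omega'}$---is automatic by linearity, and the resulting map takes values in $\Ds'(\Omega)$ by standard tensor-product arguments.

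For part (ii), I will invoke Proposition~\ref{Prop::Fubini}, which presents $W^{k,p}(\Omega_1\times\Omega')$ as a mixed-norm Sobolev space in which the $\Omega_1$- and $\Omega'$-derivatives can be analyzed separately. Given a parametric operator $T$ acting in $z_1$ and bounded on $W^{l,p}(\Omega_1)$ for both $l=0$ and $l=k$, the Fubini decomposition upgrades $T$ to a bounded operator on $W^{k,p}(\Omega)$ (the $L^p$ bound controls the $\Omega'$-Sobolev behavior while the $W^{k,p}$ bound on $\Omega_1$ handles the $z_1$-derivatives). I apply this to $H^{\Omega_1}_q,P^{\Omega_1}$ and---after running the inductive hypothesis at both $l=0$ and $l=k$, using the factor bounds from \eqref{Eqn::HTGeneral::est}---to $\tilde\Hc_q,\tilde\Pc$ on the $\Omega'$ factor; the Nijenhuis-Woolf formula then yields the required estimates for $\Hc_q^\Omega$ and $\Pc^\Omega$. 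The main obstacle is bookkeeping: tracking bidegrees and cross-cancellations through the Nijenhuis-Woolf formula so the homotopy identity holds exactly, and verifying that Proposition~\ref{Prop::Fubini} decomposes negative-order Sobolev spaces in exactly the form needed for the parametric step. Both rest on the separately established Theorem~\ref{Thm::ProdHT} and Proposition~\ref{Prop::Fubini}, so most of the argument reduces to invoking them correctly.
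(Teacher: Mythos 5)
Your proposal is correct and follows essentially the same route as the paper: induction on $m$, reduction of the inductive step to the two-factor Theorem~\ref{Thm::ProdHT}, and the Fubini decomposition of Proposition~\ref{Prop::Fubini} (via Corollary~\ref{Cor::LiftBdd}) for the Sobolev bounds, including the necessary step of running the inductive hypothesis at both levels $l=0$ and $l=k$. The only cosmetic difference is that you peel off the first factor, writing $\Omega=\Omega_1\times(\Omega_2\times\cdots\times\Omega_m)$, whereas the paper takes $U=\Omega_1\times\cdots\times\Omega_{m-1}$ and $V=\Omega_m$; since Theorem~\ref{Thm::ProdHT} applies either way, this only changes the explicit (asymmetric) form of the resulting operators, not the validity of the argument.
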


For the precise formulation of $\Hc^\Omega$ using $(H^{\Omega_j})_{j=1}^m$, see Remarks~\ref{Rmk::ExactHqforProd} and \ref{Rmk::ActualNWFormula}.


\section{Sobolev Spaces and Fubini Property}

In this section, we give the precise definition for the function space $ W^{k,p}$, and discuss a Fubini property for Sobolev  norms on product domains. 

\begin{note}\label{Note::Space::Dist}  Let  $\Ss'(\R^N)$ be the space of tempered distributions. For a bounded open subset $U\subset\R^N$, we denote by $\Ds'(U)$ the space of distributions in $U$, by $\Ss'(U)=\{\tilde f|_U:\tilde f\in \Ss'(\R^N)\}$  the space of extendable distributions in $U$, and by $\Es'(U)$ the space of distributions with compact supports in $U$.
\end{note}
See \cite[(3.1) and Proposition~3.1]{RychkovExtension} for an equivalent description of $\Ss'(U)$. See also \cite[Lemma~A.13~(ii)]{YaoSPsiCXC2}.

\begin{defn}\label{Defn::Space::Sob}
    Let $U\subseteq\R^N$ be an open subset, $k\in\Z_{\ge0}$, and  $1\le p\le\infty$.   $W^{k,p}(U)$ is the standard Sobolev space with norm $$\|f\|_{W^{k,p}(U)}=\Big(\sum_{|\alpha|\le k}\|D^\alpha f\|_{L^p(U)}^p\Big)^{1/p}.$$
    We denote by $W^{-k,p}(U):=\{\sum_{|\alpha|\le k}D^\alpha g_\alpha:g_\alpha\in L^p(U)\}$, a subset of distributions, with norm
    \begin{equation}\label{Eqn::DefnWkp::k-Norm}
        \|f\|_{W^{-k,p}(U)}=\inf\Big\{\Big(\sum_{|\alpha|\le k}\|g_\alpha\|_{L^p(U)}^p\Big)^{1/p}:f=\sum_{|\alpha|\le k}D^\alpha g_\alpha\text{ as distributions}\Big\}.
    \end{equation}
    Here when $p=\infty$ we take the usual modification by replacing the $\ell^p$ sum by the supremum.
    
    For $l\in\Z$, we use $W^{l,p}_c(U)\subset W^{l,p}(U)$ to be the subspace of all functions in $  W^{l,p}(U)$ that have compact supports in $U$.
\end{defn}

\begin{rem}\label{Rmk::RmkWkp}
    \begin{enumerate}[(i)]
        \item\label{Item::RmkWkp::Dual} For $k\ge0$ and $1\le p<\infty$, let $W_0^{k,p}(U)\subseteq W^{k,p}(U)$ be the closure of $C_c^\infty(U)$ in $\|\cdot\|_{W^{k,p}(U)}$. Then we have correspondence $W^{-k,p}(U)=W^{k,p'}_0(U)'$ with equivalent norms, where $U\subset\R^N$ is an arbitrary domain and $p'=p/(p-1)$. See e.g. \cite[Theorem~3.12]{AdamsSobolevSpaces}. It is also worth noticing that $W^{-k,p}(\R^N)=W^{k,p'}(\R^N)'$ via e.g. \cite[Corollary~3.23]{AdamsSobolevSpaces}.
        \item\label{Item::RmkWkp::Limit} When $U$ is a bounded Lipschitz domain we have $\Ss'(U)=\bigcup_{k=1}^\infty W^{-k,p}(U)$, see e.g. \cite[Lemma~A.13~(ii)]{YaoSPsiCXC2}. In other words if we have an operator $T$ defined on $W^{-k,p}(U)$ for all $k\ge1$, then $T$ is defined on $\Ss'(U)$.
    \end{enumerate}
\end{rem}

In order to incorporate the proof of Propositions~\ref{Prop::HTPlanar}, \ref{Prop::HTCXFinite} and \ref{Prop::WeiEst} we include the discussion of the fractional Sobolev spaces as well.
\begin{defn}[Sobolev-Bessel]\label{Defn::Hsp}
   Let $s\in\R$ and  $1<p<\infty$. We define the Bessel potential space $H^{s,p}(\R^N)$ to be the set of all tempered distributions $f\in\Ss'(\R^N)$ such that
    \begin{equation*}
        \|f\|_{H^{s,p}(\R^N)}:=\|(I-\Delta)^\frac s2f\|_{L^p(\R^N)}<\infty.
    \end{equation*}
    On an open subset $U\subseteq\R^N$,   define $H^{s,p}(U):=\{\tilde f|_U:\tilde f\in H^{s,p}(\R^N)\}$,   with norm $$\|f\|_{H^{s,p}(U)}:=\inf\big\{\|\tilde f\|_{H^{s,p}(\R^N)}: \tilde f\in H^{s,p}(\R^N),\  \tilde f|_U=f\big\}.$$
    We also define $\widetilde H^{s,p}(\overline U):=\{f\in H^{s,p}(\R^N):f|_{\overline{U}^c}=0\}$ as a closed subspace of $H^{s,p}(\R^N)$.
\end{defn}
    Here we use the standard (negative) Laplacian $\Delta=\sum_{j=1}^ND_{x_j}^2$. The fractional Laplacian (Bessel potential) can be defined via Fourier transform $((I-\Delta)^{s/2}f)^\wedge(\xi)=(1+4\pi^2|\xi|^2)^{s/2}\hat f(\xi)$, where $\hat f(\xi)=\int_{\mathbb R^N} f(x)e^{-2\pi ix\cdot\xi}d\xi$.

\begin{rem}\label{Rmk::RmkHsp}
Let $U\subseteq\R^N$ be a bounded Lipschitz domain and $1<p<\infty$. 

\begin{enumerate}[(i)]
    \item \label{Item::RmkHsp::Wkp=Hkp}    $H^{k,p}(U)=W^{k,p}(U)$ for all $k\in\Z$ with equivalent norms. See e.g. \cite[Lemma~A.11]{YaoSPsiCXC2} for a proof.
    \item\label{Item::RmkHsp::Interpo} The complex interpolation $[H^{s_0,p}(U),H^{s_1,p}(U)]_{\theta}=H^{(1-\theta)s_0+\theta s_1,p}(U)$ holds for all $s_0,s_1\in\R$ and $0<\theta<1$. See e.g. \cite[(1.372)]{TriebelTheoryOfFunctionSpacesIII}, where in the reference $H^{s,p}(U)=\Fs_{p2}^s(U)$ are special case of  Triebel-Lizorkin spaces. As a result our homotopy operators in Theorem~\ref{Thm::DbarThm} are in fact bounded $H^{s,p}(U)\to H^{s,p}(U)$ for all $s\in\R$ and $1<p<\infty$.
\end{enumerate}

\end{rem}

\begin{lem}\label{Lem::Extension}
Let $U\subset\R^N$ be a bounded Lipschitz domain. There is an extension operator $E:\Ss'(U)\to\Es'(\R^N)$ such that $E:W^{k,p}(U)\to W^{k,p}_c(\R^N)$ is bounded for all $k\in\Z$ and $1<p<\infty$. In particular for each $k$ and $1<p<\infty$ we have $W^{k,p}(U)=\{\tilde f|_U:\tilde f\in W^{k,p}(\R^N)\}$. 
\end{lem}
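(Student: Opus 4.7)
The plan is to reduce the statement to Rychkov's universal extension operator for bounded Lipschitz domains \cite{RychkovExtension}, which produces a linear map $\widetilde E:\Ss'(U)\to\Ss'(\R^N)$ satisfying $(\widetilde E f)|_U=f$ and bounded simultaneously $H^{s,p}(U)\to H^{s,p}(\R^N)$ for every $s\in\R$ and $1<p<\infty$. Since Remark~\ref{Rmk::RmkHsp}~\ref{Item::RmkHsp::Wkp=Hkp} identifies $H^{k,p}=W^{k,p}$ with equivalent norms for integer $k$ on both $U$ and $\R^N$, this single operator is automatically bounded $W^{k,p}(U)\to W^{k,p}(\R^N)$ for every $k\in\Z$ and $1<p<\infty$.

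To land in $\Es'(\R^N)$ I would fix a cutoff $\chi\in C_c^\infty(\R^N)$ with $\chi\equiv 1$ on a neighborhood of $\overline U$ and set $Ef:=\chi\cdot\widetilde E f$. Multiplication by $\chi$ sends $\Ss'(\R^N)$ into $\Es'(\R^N)$ and is bounded on $W^{k,p}(\R^N)$ for every $k\in\Z$ (the Leibniz rule handles $k\ge 0$, duality or the $H^{s,p}$ formulation handles $k<0$), so $E:\Ss'(U)\to\Es'(\R^N)$ is well-defined and bounded $W^{k,p}(U)\to W^{k,p}_c(\R^N)$; because $\chi\equiv 1$ near $\overline U$, $(Ef)|_U=(\widetilde E f)|_U=f$, so $E$ is a genuine extension.

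For the concluding equality $W^{k,p}(U)=\{\tilde f|_U:\tilde f\in W^{k,p}(\R^N)\}$, one inclusion follows at once from the existence of $E$. For the converse, the case $k\ge 0$ is trivial from $\|\tilde f|_U\|_{W^{k,p}(U)}\le\|\tilde f\|_{W^{k,p}(\R^N)}$, while for $k<0$ I would decompose $\tilde f=\sum_{|\alpha|\le|k|}D^\alpha g_\alpha$ with $g_\alpha\in L^p(\R^N)$ and observe that $\tilde f|_U=\sum_{|\alpha|\le|k|}D^\alpha(g_\alpha|_U)$ in $\Ds'(U)$ has exactly the form prescribed by Definition~\ref{Defn::Space::Sob}. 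The only non-routine input is Rychkov's construction of a single operator working across the full scale $s\in\R$; once that is granted, the remaining steps are a direct verification, so I do not anticipate any serious obstacle.
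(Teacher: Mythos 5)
Your proposal is correct and follows essentially the same route as the paper: both invoke Rychkov's universal extension operator \cite[Theorem~4.1]{RychkovExtension} together with the identification of $W^{k,p}$ with the corresponding Triebel--Lizorkin/Bessel-potential scale for integer $k$ on Lipschitz domains. Your added details (the cutoff to force compact support and the verification of the final equality, including the $k<0$ case via the representation $\tilde f=\sum_{|\alpha|\le|k|}D^\alpha g_\alpha$) are correct and merely make explicit what the paper leaves to the citation.
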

\begin{proof}
    An existence of such extension operator $E$ is established by Rychkov \cite[Theorem~4.1]{RychkovExtension}. In the reference we use the fact that $W^{k,p}(U)=\Fs_{p2}^k(U)$ are Triebel-Lizorkin spaces (see e.g. \cite[Lemma~A.11~(ii)]{YaoSPsiCXC2}). 
\end{proof}

Let us recall the \textit{Fubini property} for Sobolev norms on product domains. This will  enable a more convenient derivation of the Sobolev estimates for the homotopy operators.  Throughout the rest of the paper, we say two quantities $a$ and $b$ to satisfy $a\lesssim b$ if there exists some   constant $C$   such that $a\le Cb$. We say $a\approx b$ if $a\lesssim b$ and $b\lesssim a$ at the same time.  

\begin{prop}[Fubini Property]\label{Prop::Fubini}
    Let $U\subset\R^m$ and $V\subset\R^n$ be two bounded Lipschitz domains. Let $k\in\Z_+$ and $1<p<\infty$. Then
    \begin{enumerate}[(i)]
        \item\label{Item::Fubini::k+} $W^{k,p}(U\times V)=L^p(U;W^{k,p}(V))\cap L^p(V;W^{k,p}(U))$ in the sense that we have equivalent norms 
        $$\|f\|_{W^{k,p}(U\times V)}^p\approx_{U,V,k,p}\int_V\|f(\cdot,v)\|_{W^{k,p}(U)}^pdv+\int_U\|f(u,\cdot)\|_{W^{k,p}(V)}^pdu,$$
        provided either side is finite.
        \item\label{Item::Fubini::k-} $W^{-k,p}(U\times V)=L^p(U;W^{-k,p}(V))+ L^p(V;W^{-k,p}(U))$ in the sense that we have equivalent norms 
        $$\|f\|_{W^{-k,p}(U\times V)}^p\approx_{U,V,k,p}\inf_{f_1+f_2=f}\int_U\|f_1(u,\cdot)\|_{W^{-k,p}(V)}^pdu + \int_V\|f_2(\cdot,v)\|_{W^{-k,p}(U)}^pdv,$$
         provided either side is finite.
     
    \end{enumerate}
\end{prop}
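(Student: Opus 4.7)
The strategy is to reduce part~\ref{Item::Fubini::k+} to the classical Fubini property for Sobolev spaces on $\R^{m+n}$ using the Rychkov extension from Lemma~\ref{Lem::Extension}, and then derive part~\ref{Item::Fubini::k-} by a duality argument.

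For part~\ref{Item::Fubini::k+}, the direction $\lesssim$ (bounding the right-hand mixed-norm side by $\|f\|_{W^{k,p}(U\times V)}$) is immediate from Fubini: for $f\in W^{k,p}(U\times V)$ any pure derivative $D_v^\beta f$ with $|\beta|\le k$ lies in $L^p(U\times V)$, so $f(u,\cdot)\in W^{k,p}(V)$ for a.e.~$u$ with $\int_U\|f(u,\cdot)\|_{W^{k,p}(V)}^p\,du\le\|f\|_{W^{k,p}(U\times V)}^p$, and analogously in the other variable. For the nontrivial direction $\gtrsim$, I would use the extensions $E_U:\Ss'(U)\to\Es'(\R^m)$ and $E_V:\Ss'(V)\to\Es'(\R^n)$ from Lemma~\ref{Lem::Extension} applied slice-by-slice. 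Since $E_U$ is bounded both on $L^p(U)\to L^p(\R^m)$ and $W^{k,p}(U)\to W^{k,p}(\R^m)$, a slice-by-slice Minkowski argument shows that $E_U$ extends to a bounded map $L^p(U;X)\to L^p(\R^m;X)$ for any Banach space $X$, and analogously for $E_V$. Given $f\in L^p(U;W^{k,p}(V))\cap L^p(V;W^{k,p}(U))$, I set $\tilde f:=E_UE_Vf$, which extends $f$ to $\R^{m+n}$ and, by the tensor-wise boundedness, lies in $L^p(\R^m;W^{k,p}(\R^n))\cap L^p(\R^n;W^{k,p}(\R^m))$. Invoking the Fubini property for Sobolev spaces on $\R^{m+n}$---the integer-index case of the Fubini property for Triebel-Lizorkin spaces $F^s_{p,2}=W^{s,p}$ (cf.\ Remark~\ref{Rmk::RmkHsp} and the theory in \cite{TriebelTheoryOfFunctionSpacesIII})---one concludes $\tilde f\in W^{k,p}(\R^{m+n})$, and restricting to $U\times V$ gives $f\in W^{k,p}(U\times V)$ with the required bound.

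For part~\ref{Item::Fubini::k-}, the plan is to dualize. By Remark~\ref{Rmk::RmkWkp}~\ref{Item::RmkWkp::Dual}, $W^{-k,p}(U\times V)\cong W^{k,p'}_0(U\times V)'$ isometrically for $p'=p/(p-1)$. Since the dual of an intersection of Banach spaces (with the $\ell^{p'}$-type norm) is the sum of their duals (with the infimum $\ell^p$-type norm), it suffices to establish the $W_0$-analogue of part~\ref{Item::Fubini::k+}:
\begin{equation*}
W^{k,p'}_0(U\times V)\approx L^{p'}(U;W^{k,p'}_0(V))\cap L^{p'}(V;W^{k,p'}_0(U)).
\end{equation*}
The inclusion $\subseteq$ follows by approximation: any $f\in W^{k,p'}_0(U\times V)$ is the $W^{k,p'}$-limit of $C_c^\infty(U\times V)$-functions, each of which clearly lies in the right-hand intersection, and passing to the limit uses part~\ref{Item::Fubini::k+} and the closedness of $W^{k,p'}_0(V)$ in $W^{k,p'}(V)$. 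For the inclusion $\supseteq$ I invoke the characterization, valid for Lipschitz domains $\Omega$, that $g\in W^{k,p}_0(\Omega)$ iff the zero-extension of $g$ lies in $W^{k,p}(\R^N)$: extending an element $f$ of the intersection by zero from $U\times V$ to $\R^{m+n}$ gives a function in $L^{p'}(\R^m;W^{k,p'}(\R^n))\cap L^{p'}(\R^n;W^{k,p'}(\R^m))=W^{k,p'}(\R^{m+n})$ (by the full-space Fubini) that vanishes outside $U\times V$, so $f\in W^{k,p'}_0(U\times V)$. Dualizing the displayed equivalence and using the identification $L^{p'}(U;W^{k,p'}_0(V))'=L^p(U;W^{-k,p}(V))$ (valid by reflexivity of $W^{k,p'}_0(V)$) then yields part~\ref{Item::Fubini::k-}.

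The main obstacle will be the two auxiliary facts just invoked: the tensor-wise boundedness of the Rychkov extensions on vector-valued $L^p$ spaces (which can be made quantitative either by inspection of Rychkov's construction or by a direct slice-by-slice Minkowski argument), and the zero-extension characterization of $W^{k,p}_0$ on Lipschitz domains for arbitrary order $k$, which is classical but requires careful citation in the Lipschitz (non-smooth) setting.
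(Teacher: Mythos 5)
Your part~\ref{Item::Fubini::k+} is essentially the paper's argument: the easy inclusion by Fubini, and the converse via the tensorized Rychkov extension $E^U\otimes E^V$ reducing to the classical full-space identity \eqref{Eqn::Fubini::k+Rn}. One caveat: your claim that $E_U$ extends to a bounded map $L^p(U;X)\to L^p(\R^m;X)$ for \emph{any} Banach space $X$ is false in general (vector-valued extension of an $L^p$-bounded operator is a UMD-type issue, and Rychkov's operator is not dominated by a positive Schur-test kernel). It is true here only because the relevant $X$ is $W^{k,p}$ with the \emph{same} exponent $p$: commuting $E_U\otimes\id$ with the derivatives $D_v^\alpha$ and applying Fubini reduces the bound to the scalar estimate $E_U:L^p(U)\to L^p(\R^m)$. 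State the lemma for this specific $X$ rather than for arbitrary Banach spaces.

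For part~\ref{Item::Fubini::k-} you take a genuinely different route. The paper never dualizes on the bounded domains: it extends $f\in W^{-k,p}(U\times V)$ to $\tilde f\in W^{-k,p}(\R^{m+n})$ by Rychkov (Lemma~\ref{Lem::Extension} is bounded on negative-order spaces), applies the full-space decomposition \eqref{Eqn::Fubini::k-Rn} --- obtained by dualizing \eqref{Eqn::Fubini::k+Rn} on $\R^{m+n}$, where $W^{-k,p}(\R^N)=W^{k,p'}(\R^N)'$ without any $W_0$ subtlety --- and restricts the two pieces back to $U\times V$. You instead dualize directly on $U\times V$ via $W^{-k,p}(U\times V)=W^{k,p'}_0(U\times V)'$, which forces you to prove the $W_0$-analogue of \ref{Item::Fubini::k+} and hence to invoke the zero-extension characterization of $W^{k,p'}_0$ on Lipschitz domains (applied to the product $U\times V$, which is Lipschitz), the vector-valued duality $L^{p'}(U;X)'=L^p(U;X')$ (valid by reflexivity of $W^{k,p'}_0(V)$), and the density of $C_c^\infty(U\times V)$ in each summand of the intersection. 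All of these hold and your argument goes through, but it is heavier; the paper's extend--decompose--restrict scheme derives the negative case from the positive one essentially for free, at the cost only of the already-available boundedness of Rychkov's extension on $W^{-k,p}(U\times V)$.
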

\begin{rem}
    Here $L^p(U;X)$ can be interpreted as the space of strongly measurable functions which take values in a Banach space $X$, see e.g. \cite[Section~1.2.b]{AnalysisinBanachI} for more discussion.
\end{rem}

Essentially, Proposition  \ref{Prop::Fubini} shows that   for the Sobolev functions on the product of two domains, the $L^p$ norm of the  mixed derivatives across the two domains  can be controlled by the $L^p$ norm of the pure derivatives in each individual domain.

\begin{proof}
    When $U$ and $V$ are the total spaces $\mathbb R_u^m$ and $\mathbb R_v^n$, respectively,  the decomposition 
    \begin{equation}\label{Eqn::Fubini::k+Rn}
        W^{k,p}(\R^m_u\times \R^n_v)=L^p(\R^m_u;W^{k,p}(\R^n_v))\cap L^p(\R^n_v;W^{k,p}(\R^m_u)),\qquad k\ge1,\quad 1<p<\infty
    \end{equation}
    is a standard result, see e.g. \cite[Chapter~2.5.13]{TriebelTheoryOfFunctionSpacesI}. Recall from Remark~\ref{Rmk::RmkWkp}~\ref{Item::RmkWkp::Dual} $W^{-k,p}(\R^r)=W^{k,p'}(\R^r)'$ for $1<p<\infty$, $\frac{1}{p}+\frac{1}{p'} =1$ and $r\in\{m,n,m+n\}$, taking duality (see also \cite[Proposition~1.3.3]{AnalysisinBanachI}) we get 
    \begin{equation}\label{Eqn::Fubini::k-Rn}
        W^{-k,p}(\R^m_u\times \R^n_v)=L^p(\R^m_u;W^{-k,p}(\R^n_v))+ L^p(\R^n_v;W^{-k,p}(\R^m_u))
    \end{equation}
    with equivalent norms.
    
    \smallskip\noindent\ref{Item::Fubini::k+}: Now $k\ge0$. Clearly $ W^{k,p} (U\times V)\subset L^p(U;W^{k,p}(V))$ and $W^{k,p} (U\times V)\subset L^p(V;W^{k,p}(U))$, both of which are continuous embeddings. This gives $W^{k,p} (U\times V)\subseteq L^p(U;W^{k,p}(V))\cap L^p(V;W^{k,p}(U))$.
    
    Conversely, let $E^U:\Ss'(U)\to\Ss'(\R^m)$ and $E^V:\Ss'(V)\to \Ss'(\R^n)$ be the extension operators given in Lemma~\ref{Lem::Extension}. Define $\Ec^{U\times V}:=E^U\otimes E^V$  such that for $f(u,v)=g(u)h(v)$ we have $(\Ec^{U\times V}f)(u,v)=(E^Ug)(u)(E^Vh)(v)$. Clearly $$\Ec^{U\times V}=(E^U\otimes\id^{\R^n})\circ(\id^U\otimes E^V)=(\id^{\R^m}\otimes E^V)\circ(E^U\otimes\id^V),$$ where $(E^U\otimes\id^V)f(u,v)=E^U(f(\cdot,v))(u)$ for $(u,v)\in\R^m\times V$ and similarly for the rest. Therefore we have the following boundedness
    \begin{equation*}
        \Ec^{U\times V}:L^p(U;W^{k,p}(V))\xrightarrow{\id^U\otimes E^V}L^p(U;W^{k,p}(\R^n))\xrightarrow{E^U\otimes\id^{\R^n}} L^p(\R^m;W^{k,p}(\R^n)).
    \end{equation*}
    Similarly $\Ec^{U\times V}:L^p(V;W^{k,p}(U))\to L^p(\R^n;W^{k,p}(\R^m))$ as well. 
    
    Therefore by \eqref{Eqn::Fubini::k+Rn}, for every $f\in W^{k,p}(U\times V)$,
    \begin{align*}
        \|f\|_{W^{k,p}(U\times V)}\le&\|\Ec^{U\times V}f\|_{W^{k,p}(\R^m\times\R^n)}\approx \|\Ec^{U\times V}f\|_{L^p(\R^m;W^{k,p}(\R^n))}+\|\Ec^{U\times V}f\|_{L^p(\R^n;W^{k,p}(\R^m))}
        \\\lesssim&\|f\|_{L^p(U;W^{k,p}(V))}+\|f\|_{L^p(V;W^{k,p}(U))}.
    \end{align*}
    We conclude that \ref{Item::Fubini::k+} holds.

    \smallskip\noindent\ref{Item::Fubini::k-}: Clearly $L^p(U;W^{-k,p}(V))\subset W^{-k,p}(U\times V)$ and $ L^p(V;W^{-k,p}(U))\subset W^{-k,p}(U\times V)$ by \eqref{Eqn::DefnWkp::k-Norm}. This gives the embedding $L^p(U;W^{-k,p}(V))+L^p(V;W^{-k,p}(U))\subseteq W^{-k,p}(U\times V)$.
    
    Conversely, for every $f\in W^{-k,p}(U\times V)$, by Lemma~\ref{Lem::Extension} it admits an extension $\tilde f\in W^{-k,p}(\R^m\times\R^n)$. By \eqref{Eqn::Fubini::k-Rn} there exist  $\tilde f_1\in L^p(\R^m_u;W^{-k,p}(\R^n_v))$ and $\tilde f_2\in L^p(\R^n_v;W^{-k,p}(\R^m_u))$ such that $\tilde f=\tilde f_1+\tilde f_2$. Taking restrictions we get the existence of the decomposition $f=f_1+f_2$ where $f_1: =\tilde f_1|_{U\times V}\in L^p(U;W^{-k,p}(V))$ and $f_2: =\tilde f_2\in L^p(V;W^{-k,p}(U))$.
    
    Now for given $f\in W^{-k,p}(U\times V)$, let $f_1\in L^p(U;W^{-k,p}(V)) $ and $f_2\in L^p(V;W^{-k,p}(U))$ be arbitrary functions such that $f_1+f_2=f$, which exist from above. Thus $$\|f\|_{W^{-k,p}(U\times V)}= \|f_1\|_{W^{-k,p}(U\times V)} +\|f_2\|_{W^{-k,p}(U\times V)} \lesssim \|f_1\|_{L^p(U;W^{-k,p}(V))} +\|f_2\|_{L^p(V;W^{-k,p}(U))} 
    .    $$ Therefore by taking infimum over the decomposition $f_1+f_2=f$ we conclude that
    $$ \|f\|_{W^{-k,p}(U\times V)}^p\lesssim \inf_{f_1+f_2=f}\int_U\|f_1(u,\cdot)\|_{W^{-k,p}(V)}^pdu + \int_V\|f_2(\cdot,v)\|_{W^{-k,p}(U)}^pdv.$$
    That is,  $W^{-k,p}(U\times V)\subseteq L^p(U;W^{-k,p}(V))+L^p(V;W^{-k,p}(U))$, completing the proof of \ref{Item::Fubini::k-}.
\end{proof}

\begin{cor}\label{Cor::LiftBdd}
    Let $U\subset\R^m,V\subset\R^n$ be two bounded Lipschitz domains, and  $k\in\Z$, $1<p<\infty$.    
    Let $T:L^p(U)\to L^p(U)$ be a bounded linear operator that extends to a bounded linear map $T:W^{k,p}(U)\to W^{k,p}(U)$.     
    Defines $\Tc:L^p(U\times V)\to L^p(U\times V)$ by acting $T$ on the coordinate component of $U$, i.e. $\Tc f(u,v):=T(f(\cdot,v))(u)$. Then $\Tc:W^{k,p}(U\times V)\to W^{k,p}(U\times V)$ is defined and bounded.
\end{cor}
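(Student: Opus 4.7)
The plan is to split the proof according to the sign of $k$ and apply the two halves of the Fubini Property (Proposition~\ref{Prop::Fubini}) respectively. In both cases the starting observation is that $\Tc:L^p(U\times V)\to L^p(U\times V)$ is bounded with norm at most $\|T\|_{L^p(U)\to L^p(U)}$, which is immediate from Fubini:
\[
\|\Tc f\|_{L^p(U\times V)}^p=\int_V\|T(f(\cdot,v))\|_{L^p(U)}^p\,dv\le\|T\|^p\|f\|_{L^p(U\times V)}^p.
\]

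For $k\ge0$, I would apply Proposition~\ref{Prop::Fubini}~\ref{Item::Fubini::k+} and bound the two summands $\|\Tc f\|_{L^p(V;W^{k,p}(U))}$ and $\|\Tc f\|_{L^p(U;W^{k,p}(V))}$ separately. The former is direct: since $\Tc f(\cdot,v)=T(f(\cdot,v))$ for a.e.\ $v$, the hypothesis $T:W^{k,p}(U)\to W^{k,p}(U)$ gives a fiberwise bound that integrates in $v$. For the latter, the key point is that $\Tc$ commutes with $v$-derivatives, i.e.\ $D_v^\beta\Tc f=\Tc(D_v^\beta f)$ whenever $|\beta|\le k$, which one verifies by testing against $\phi\in C_c^\infty(U\times V)$ and applying Fubini. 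Combining this commutation with the $L^p(U\times V)$ boundedness of $\Tc$ yields $\|D_v^\beta\Tc f\|_{L^p(U\times V)}\lesssim\|D_v^\beta f\|_{L^p(U\times V)}$, and summing over $|\beta|\le k$ controls $\|\Tc f\|_{L^p(U;W^{k,p}(V))}$ by $\|f\|_{L^p(U;W^{k,p}(V))}$.

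For $k<0$, set $l:=-k>0$ and apply Proposition~\ref{Prop::Fubini}~\ref{Item::Fubini::k-}. Given $f\in W^{-l,p}(U\times V)$, choose a near-optimal decomposition $f=f_1+f_2$ with $f_1\in L^p(U;W^{-l,p}(V))$ and $f_2\in L^p(V;W^{-l,p}(U))$. The summand $f_2$ is handled as in the positive case: apply $T:W^{-l,p}(U)\to W^{-l,p}(U)$ fiberwise in $v$ to obtain $\Tc f_2\in L^p(V;W^{-l,p}(U))$ with the right bound. For $f_1$, I would represent each fiber $f_1(u,\cdot)$ via Definition~\ref{Defn::Space::Sob} as $\sum_{|\alpha|\le l}D_v^\alpha g_\alpha(u,\cdot)$, arranged measurably in $u$ so that $g_\alpha\in L^p(U\times V)$ with $\sum_\alpha\|g_\alpha\|_{L^p(U\times V)}^p\lesssim\|f_1\|_{L^p(U;W^{-l,p}(V))}^p$; then $\Tc f_1=\sum_{|\alpha|\le l}D_v^\alpha(\Tc g_\alpha)$ lies in $L^p(U;W^{-l,p}(V))$ by the $L^p$-boundedness of $\Tc$. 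Summing the two pieces and taking infimum over decompositions yields the desired $W^{-l,p}(U\times V)$ estimate.

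The step I expect to be the main obstacle is the measurable choice of $g_\alpha$ in the $k<0$ argument, and, relatedly, giving a clean meaning to $\Tc f$ when $f$ is only a distribution rather than an $L^p$ function. The former should follow from a standard measurable-selection principle applied to the infimum in the definition of the $W^{-l,p}(V)$ norm, while the latter can either be addressed through the same decomposition (after checking that the resulting $\Tc f$ is independent of the decomposition $f=f_1+f_2$ and of the representation $f_1=\sum D_v^\alpha g_\alpha$), or sidestepped by duality: using $W^{-l,p}(U\times V)=W^{l,p'}_0(U\times V)'$ from Remark~\ref{Rmk::RmkWkp}~\ref{Item::RmkWkp::Dual}, one instead applies the positive-$k$ argument to the adjoint $T^*$, which is bounded on both $L^{p'}(U)$ and $W^{l,p'}_0(U)$ by dualizing the two hypotheses on $T$, and then dualizes the resulting bound to recover boundedness of $\Tc$ on $W^{-l,p}(U\times V)$.
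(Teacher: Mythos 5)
Your proposal is correct and follows essentially the same route as the paper: the commutation $\Tc D_v=D_v\Tc$ together with the $L^p$ bound gives boundedness on $L^p(U;W^{k,p}(V))$, fiberwise application of $T$ gives boundedness on $L^p(V;W^{k,p}(U))$, and Proposition~\ref{Prop::Fubini} combines the two. The paper compresses this into three lines without the explicit case split on the sign of $k$ or the measurable-selection/duality discussion, but the underlying argument is identical.
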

Here we are indeed using $\Tc=T\otimes\id^V$. See also Convention~\ref{Conv::OpProd}.
\begin{proof}

    First by definition $\Tc D_v=D_v\Tc$,  which ensures that $\Tc:L^p(U;W^{k,p}(V))\to L^p(U;W^{k,p}(V))$. The boundedness $\Tc:L^p(V;W^{k,p}(U))\to L^p(V;W^{k,p}(U))$ is a direct consequence of that of $T: W^{k,p}(U)\to W^{k,p}(U)$. The $W^{k,p}(U\times V)\to W^{k,p}(U\times V)$ bound of $\Tc$  then follows from Proposition~\ref{Prop::Fubini}.
\end{proof}

\section{Nijenhuis-Woolf Formulae and the proof of Theorem \ref{Thm::HTGeneral}}
In this section, we shall construct  homotopy formulae on product domains making use of an idea in \cite{NijenhuisWoolf}. This together with Proposition \ref{Prop::Fubini} allows us to prove Theorem \ref{Thm::HTGeneral}. 

First we introduce some notations and conventions for linear operators defined on forms (of mixed degrees), which will be used   to facilitate our proof.  

\begin{conv}[Spaces on forms]\label{Cov:;spacef}
    Let $\Xs\in\{\Ss',\Ds',C^\infty,W^{k,p},H^{s,p}:k\in\Z,s\in\R,1<p<\infty\}$ and let $U\subseteq\C^n$. For $1\le q\le n$ we use $\Xs(U;\wedge^{0,q})$ the space of $(0,q)$ forms $f(\zeta)=\sum_{|I|=q}f_I(\zeta)d\bar\zeta^I$ where $f_I\in\Xs(U)$ for all $I$. If $\Xs\in\{W^{k,p},H^{s,p}\}$, then we use the norm $\|f\|_{\Xs(U;\wedge^{0,q})}=\sum_{|I|=q}\|f_I\|_{\Xs(U)}$. Denote by $\Xs(U;\wedge^{0,\bullet})=\bigoplus_{q=0}^n\Xs(U;\wedge^{0,q})$ for forms of mixed degrees.
      \end{conv}

We adopt the following convention to extend an operator originally defined on forms of a single degree to one on forms of mixed degrees. 

\begin{conv}[Operators on mixed degree forms] \label{Conv::MixForm}
    Let $U\subseteq\C^n$, $0\le q,r\le n$ and $\Xs,\Ys\in\{\Ss',\Ds',C^\infty,W^{k,p},H^{s,p}\}$.     
    We identify a linear operator $S:\Xs(U;\wedge^{0,q})\to\Ys(U;\wedge^{0,r})$ as $S:\Xs(U;\wedge^{0,\bullet})\to\Ys(U;\wedge^{0,\bullet})$ by setting $S(f_Jd\bar \zeta_J)=0$ if $|J|\neq q$. 

    For a family of operators $(T_q)_{q=0}^n$ where each $T_q$ is defined on $(0,q)$ forms, we   use $T  =\sum_{q=0}^nT_q$ to denote the  corresponding  operator on mixed degree forms. Namely, for a form $f(z)=\sum_{q=0}^nf_q(z)$ where $f_q(z)$ is of degree $(0,q)$, we define $Tf=\sum_{q=0}^nT_qf_q$.    
\end{conv}
\begin{rem}\label{Rmk::MixForm}
    Under this convention, we can rewrite the homotopy formulae in Theorems~\ref{Thm::DbarThm} and \ref{Thm::HTGeneral} as a single formula (here we use $\Hc_0=\Hc_{n+1}=0$)
    $$f=\Pc f+\dbar\Hc f+\Hc\dbar f,\quad\text{for}\quad f\in\Ss'(\Omega;\wedge^{0,\bullet}),\qquad\text{where}\quad\Hc=\textstyle\sum_{q\ge1}\Hc_q.$$
\end{rem}

\begin{rem}
    For an operator $S$  defined on functions, namely, with $q=r=0$ in  Convention \ref{Conv::MixForm}, one  extends $S$ on differential forms by taking zero value on $(0,q)$ forms when $q\ge1$. In the paper not all the operators on functions follow this convention. For example for an extension operator $E:\Xs(U)\to\Xs(\C^n)$ we define it on forms by acting on components, i.e. $(Ef)(z)=\sum_I(Ef_I)(z)d\bar z^I$.
\end{rem}


Next, we extend operators originally defined on slices to the entire product domain using the following convention. 

\begin{conv}[Operator on product domains]\label{Conv::OpProd}
    Let $U\subseteq\C^m$ and $V\subseteq\C^n$ be two open sets, endowed with standard coordinate system $z=(z^1,\dots,z^m)$ and $\zeta=(\zeta^1,\dots,\zeta^n)$ respectively. For a linear operator $T^U:\Xs(U;\wedge^{0,\bullet})\to\Ys(U;\wedge^{0,\bullet})$, we denote $\Tc^U$ for the associated operator $T^U\otimes\id^V$ on $(0,\bullet)$ forms defined on $U\times V$ by setting
    \begin{equation}\label{Eqn::OpProd}
    \Tc^U(\omega\wedge d\bar \zeta^K)(z,\zeta):=T^U(\omega(\cdot,\zeta))(z)\wedge d\bar\zeta^K,\quad\text{where }\omega(z,\zeta)=\sum_I\omega_I(z,\zeta)d\bar z^I.
    \end{equation}
    
In particular, if we write $T^U(\sum_Jg_Jd\bar z^J)=:\sum_{I,J}(T^U_{IJ}g_I)d\bar z^J$ where $T^U_{IJ}:\Xs(U)\to\Ys(U)$ are linear operators on functions, then for a form $f(z,\zeta)=\sum_{J,K}f_{JK}(z,\zeta)d\bar z^J\wedge d\bar \zeta^K$,
    \begin{equation*}
        (\Tc^Uf)(z,\zeta)=\sum_{I,J,K}\big\{T^U_{IJ}[f_{IK}(\cdot,\zeta)]\big\}(z)d\bar z^I\wedge d\bar \zeta^K,\quad z\in U,\quad\zeta\in V.
    \end{equation*}
\end{conv}

Motivated by a one-dimensional analogue in \cite[(2.2.2) - (2.2.5)]{NijenhuisWoolf}, we deduce our homotopy formulae  making use of  the following product-type configuration. See also Remark~\ref{Rmk::ExactHqforProd}.

\begin{thm}[Product homotopy formulae]\label{Thm::ProdHT}
    Let $U\subset\C^{n_U}$ and $V\subset\C^{n_V}$ be two open subsets. Suppose  for each $W\in \{ U, V\}$, there exist continuous linear operators $H^{W}_q:C^\infty(\overline W;\wedge^{0,q})\to \Ds'(W;\wedge^{0,q-1})$ for $1\le q\le n_W$, such that the following homotopy formulae  hold ($H^W_{n_W+1}=0$ as usual) for $1\le q\le n_W$:
   \begin{equation}\label{htf}
                  f= \dbar H_q^{W} f + H_{q+1}^{W}\dbar  f\quad\text{for all}\quad f\in C^\infty (\overline W;  \wedge^{0,q}).
          \end{equation}
    Set $P^Wf:=f-H_1^W\dbar f$ for functions $f\in C^\infty(\overline W)$.
    \begin{enumerate}[(i)]
        \item\label{Item::ProdHT::HT} Then we have homotopy formulae $f=\Pc^{U\times V} f+\dbar\Hc^{U\times V} f+\Hc^{U\times V}\dbar f$ for $f\in C^\infty(\overline{U\times V};\wedge^{0,\bullet})$, where (see Conventions~\ref{Conv::MixForm} and \ref{Conv::OpProd}) 
        \begin{gather}\label{Eqn::ProdHT::DefPH}
            \Pc^{U\times V}:=\Pc^U\circ\Pc^V=P^U\otimes P^V;\qquad \Hc^{U\times V}:=\Hc^U+\Pc^U\circ\Hc^V=H^U\otimes\id^V+P^U\otimes H^V.
        \end{gather}
        \item\label{Item::ProdHT::Bdd} Let $k\in\Z$ and $1<p<\infty$. Suppose further that $U$ and $V$ are bounded Lipschitz domains, $P^U,H^U:W^{l,p}(U;\wedge^{0,\bullet})\to W^{l,p}(U;\wedge^{0,\bullet})$ and $H^V:W^{l,p}(V;\wedge^{0,\bullet})\to W^{l,p}(V;\wedge^{0,\bullet})$ are all defined and bounded for $l\in\{0,k\}$, then $\Hc^{U\times V}$ given in \eqref{Eqn::ProdHT::DefPH} are defined and bounded in $W^{k,p}(U\times V;\wedge^{0,\bullet})\to W^{k,p}(U\times V;\wedge^{0,\bullet})$ as well.

        If in addition $P^V:W^{l,p}(V)\to W^{l,p}(V)$ is bounded for $l\in\{0,k\}$, then $\Pc^{U\times V}:W^{k,p}(U\times V)\to W^{k,p}(U\times V)$ is bounded as well.
    \end{enumerate}
\end{thm}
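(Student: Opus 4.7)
The plan is to reduce the identity $f=\Pc^{U\times V}f+\dbar\Hc^{U\times V}f+\Hc^{U\times V}\dbar f$ of part~\ref{Item::ProdHT::HT} to a purely algebraic computation by lifting every slice-wise operator to $U\times V$ via tensoring with the identity on the other factor, and then to deduce part~\ref{Item::ProdHT::Bdd} from Corollary~\ref{Cor::LiftBdd}. Concretely I set $\Hc^U:=H^U\otimes\id^V$, $\Pc^U:=P^U\otimes\id^V$, $\Hc^V:=\id^U\otimes H^V$, $\Pc^V:=\id^U\otimes P^V$, so that by \eqref{Eqn::ProdHT::DefPH} we have $\Hc^{U\times V}=\Hc^U+\Pc^U\circ\Hc^V$ and $\Pc^{U\times V}=\Pc^U\circ\Pc^V$. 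Splitting $\dbar=\dbar_z+\dbar_\zeta$, the target identity is equivalent to $\dbar\Hc^{U\times V}+\Hc^{U\times V}\dbar=\id-\Pc^{U\times V}$.

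I would next establish three auxiliary identities on $C^\infty(\overline{U\times V};\wedge^{0,\bullet})$:
\begin{enumerate}[(a)]
\item the slice lifts of \eqref{htf}: $\dbar_z\Hc^U+\Hc^U\dbar_z=\id-\Pc^U$ and $\dbar_\zeta\Hc^V+\Hc^V\dbar_\zeta=\id-\Pc^V$, obtained by applying \eqref{htf} with the other variable held fixed as a parameter;
\item the graded anticommutations $\dbar_\zeta\Hc^U+\Hc^U\dbar_\zeta=0$ and $\dbar_z\Hc^V+\Hc^V\dbar_z=0$, which I verify on a monomial $h(z,\zeta)\,d\bar z^I\wedge d\bar\zeta^K$ with $|I|=p$ by tracking the signs $(-1)^p$ versus $(-1)^{p-1}$ incurred when moving a $d\bar\zeta^j$ past $d\bar z$-forms whose degrees differ by one, after which the two contributions cancel;
\item the commutations $\dbar_\zeta\Pc^U=\Pc^U\dbar_\zeta$ and $\dbar_z\Pc^V=\Pc^V\dbar_z$, which I derive algebraically by substituting $\Pc^U=\id-\dbar_z\Hc^U-\Hc^U\dbar_z$ from (a) and invoking $\dbar_z\dbar_\zeta+\dbar_\zeta\dbar_z=0$ together with (b).
\end{enumerate}

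Once (a)--(c) are in place, I group the terms in $\dbar\Hc^{U\times V}+\Hc^{U\times V}\dbar$ into four blocks:
\[
(\dbar_z\Hc^U+\Hc^U\dbar_z)+(\dbar_\zeta\Hc^U+\Hc^U\dbar_\zeta)+(\dbar_z\Pc^U\Hc^V+\Pc^U\Hc^V\dbar_z)+(\dbar_\zeta\Pc^U\Hc^V+\Pc^U\Hc^V\dbar_\zeta).
\]
Block one gives $\id-\Pc^U$ by (a); block two vanishes by (b); block three vanishes after pushing $\dbar_z$ past $\Pc^U$ via (c) and then invoking the second identity of (b); block four reduces, by (c) and (a), to $\Pc^U(\dbar_\zeta\Hc^V+\Hc^V\dbar_\zeta)=\Pc^U(\id-\Pc^V)$. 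Summing yields $\id-\Pc^U+\Pc^U-\Pc^U\Pc^V=\id-\Pc^{U\times V}$, completing \ref{Item::ProdHT::HT}.

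For \ref{Item::ProdHT::Bdd}, I apply Corollary~\ref{Cor::LiftBdd} (and its symmetric version acting on the $V$-factor, which is the same Fubini argument with the roles of $U$ and $V$ swapped) to each of the lifts $\Hc^U,\Pc^U,\Hc^V,\Pc^V$: the hypothesis that $H^U,P^U,H^V$ are bounded $W^{l,p}\to W^{l,p}$ for $l\in\{0,k\}$ on the single factors upgrades to $W^{k,p}(U\times V)\to W^{k,p}(U\times V)$ boundedness of the lifted operators, and then summation and composition yield the bound for $\Hc^{U\times V}$; adding the hypothesis on $P^V$ yields that for $\Pc^{U\times V}$. I expect the main obstacle to be (b): the tensor-product rule in \eqref{Eqn::OpProd} imposes a specific sign convention, and verifying that $\dbar_\zeta$ genuinely anticommutes (rather than commutes) with $\Hc^U$ requires a careful monomial-level sign check, which I would carry out once and then extend by linearity before feeding into the telescoping computation above.
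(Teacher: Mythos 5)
Your proof is correct and follows essentially the same route as the paper: your identities (a)--(c) are precisely the content of Lemma~\ref{Lem::ProdHTClaim} (the paper verifies the anticommutation $\dbar_\zeta\Hc^U=-\Hc^U\dbar_\zeta$ by the same monomial sign count, and gets the commutation of $\Pc^U$ with $\dbar_{z,\zeta}$ a bit more directly from $\dbar_zP^U=P^U\dbar_z=0$ rather than by your algebraic substitution --- note your block three actually uses $\dbar_z\Pc^U=\Pc^U\dbar_z$, which is not listed in your (c) but follows at once from (a) and $\dbar_z^2=0$), and the telescoping computation together with Corollary~\ref{Cor::LiftBdd} for part (ii) is identical. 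The one point you gloss over is why $\Pc^{U\times V}$ and $\Pc^U\circ\Hc^V=P^U\otimes H^V$ are even defined on $C^\infty(\overline{U\times V};\wedge^{0,\bullet})$ in part (i), since $H^V$ and $P^V$ only output distributions; the paper dispatches this by lifting each factor operator to $C_c^\infty(\C^{n_W})\to\Ds'(W)$ and invoking the Schwartz Kernel Theorem to make sense of the tensor product.
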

\begin{rem}
    It is worth pointing out that the original formulae of Nijenhuis-Woolf in \cite{NijenhuisWoolf} are restricted to products of planar domains. Under their settings the $\Hc^{\Omega_j}$  operators (denoted by $T^j$ in the reference) are the solid Cauchy integral over $\Omega_j$. In their notation the operators are defined on functions rather than $(0,1)$ forms. Moreover, even in the case when each $\Omega_j$ is smooth, while $\Hc^{\Omega_j}$ there satisfies the desired $W^{k, p}$ regularity   for $k\ge 0$ (see, for instance, \cite[Proposition 3.1]{PanZhang} with $\mu\equiv 1$ there),  their $ \Pc^{\Omega_j}$ operators (denoted by $S^j$ in the reference), which are given by the boundary Cauchy integrals over $b \Omega_j$,   do not yield well-defined or bounded mappings on the $L^p$  space  as required in Theorem \ref{Thm::ProdHT}~\ref{Item::ProdHT::Bdd}. In Proposition \ref{Prop::HTPlanar} in the next section, we shall introduce a slightly different choice of homotopy operators to overcome this issue.
\end{rem}

\begin{rem}[Formulae with separated degrees]\label{Rmk::ExactHqforProd}
    
    Let $z=(z^1,\dots,z^{n_U})$ and $\zeta=(\zeta^1,\dots,\zeta^{n_V})$ be standard coordinate systems for $\C^{n_U}$ and $\C^{n_V}$, respectively.
    For $0\le j\le n_U$ and $0\le k\le n_V$, let us define the standard projection $\pi_{j,k}$ of forms by
    \begin{equation*}
        \pi_{j,k}f:=\sum_{|J|=j,|K|=k}f_{JK}d\bar z^J\wedge d\bar \zeta^K,  \quad \text{for every }f=\sum_{J\subseteq\{1,\dots,n_U\},K\subseteq\{1,\dots,n_V\}}f_{JK}d\bar z^J\wedge d\bar \zeta^K.
    \end{equation*}
    We have $f=\sum_{j=0}^{n_U}\sum_{k=0}^{n_V}\pi_{j,k}f$ and $\sum_{j+k=q}\pi_{j,k}f$ is the degree $(0,q)$ components of $f$.

    Under this notation,  and Conventions \ref{Conv::MixForm} and \ref{Conv::OpProd}, we can write $\Hc^{U\times V}_q$ in \eqref{Eqn::ProdHT::DefPH} for $1\le q\le n_U+n_V$ as
    \begin{equation*}
    \begin{split}
                \Hc^{U\times V}_q&= \Hc^U\circ \Big(\sum_{j+k=q }\pi_{j,k}\Big)+\Pc^U\circ\Hc^V\circ \Big(\sum_{j+k=q }\pi_{j,k}\Big)\\
                &=\sum_{j=1}^q\Hc^U_j\circ\pi_{j,q-j}+\Pc^U\circ\Hc^V_q\circ\pi_{0,q}=\sum_{j=1}^q(H^U_j\otimes\id^V)\circ\pi_{j,q-j}+(P^U\otimes H^V_q)\circ\pi_{0,q}.
        \end{split}
    \end{equation*}
Note that the formula \eqref{Eqn::ProdHT::DefPH} is asymmetric. Namely, if we swap $U$ and $V$, the homotopy operators in \eqref{Eqn::ProdHT::DefPH} are not the same.
  
\end{rem}
\begin{rem}
    It is important that $U$ and $V$ in the assumption are at most Lipschitz. In the proof of Theorem~\ref{Thm::DbarThm} and \ref{Thm::HTGeneral} we use induction with $U=\Omega_1\times\dots\times\Omega_{m-1}$ and $V=\Omega_m$. However, even for two smooth domains, the boundary regularity of their product is merely Lipschitz.

    In contrast, Theorem~\ref{Thm::ProdHT}~\ref{Item::ProdHT::Bdd} remains true for non-Lipschitz domains $U$ and $V$ if the analogy of Proposition~\ref{Prop::Fubini} holds for such $U$ and $V$. A typical example is the Hartogs triangle, which is a non-Lipschitz but uniform domain due to \cite{BurchardFlynnLuShawHartogsTriangle}.
\end{rem}

To prove Theorem~\ref{Thm::ProdHT}, we let $z=(z^1,\dots,z^{n_U})$ and $\zeta=(\zeta^1,\dots,\zeta^{n_V})$ be the standard coordinate systems for $\C^{n_U}$ and $\C^{n_V}$, respectively. We denote by $\dbar_z=\sum_{j=1}^{n_U}\dbar z^j\wedge \Coorvec{\bar z^j}$ and $\dbar_\zeta=\sum_{k=1}^{n_V}\dbar \zeta^k\wedge \Coorvec{\bar \zeta^k}$  the $\dbar$-operators of the $z$-component and the $\zeta$-component, respective. Note that on the product domain $U\times V$ we have $\dbar=\dbar_{z,\zeta}: =\dbar_z+\dbar_\zeta$. 

The key computation is the following (cf. \cite[(2.1.1) - (2.1.3)]{NijenhuisWoolf}):
\begin{lem}\label{Lem::ProdHTClaim}
    Keeping the notations as above and as in Theorem~\ref{Thm::ProdHT}, on $U\times V$ we have \begin{equation}\label{Eqn::ProdHT::Claim}
        \dbar_{z,\zeta}\Pc^U=\Pc^U\dbar_{z,\zeta},\quad \dbar_{z,\zeta}\Pc^V=\Pc^V\dbar_{z,\zeta},\quad\dbar_\zeta\Hc^U=-\Hc^U\dbar_\zeta,\quad\dbar_z\Hc^V=-\Hc^V\dbar_z.
    \end{equation}
Moreover, 
    \begin{equation}\label{Eqn::ProdHT::Claim2}
        \dbar\Hc^U+\Hc^U\dbar=\id-\Pc^U,\qquad \dbar\Hc^V+\Hc^V\dbar=\id-\Pc^V, 
    \end{equation}
    provided that the operators on both sides of the equalities are defined.
\end{lem}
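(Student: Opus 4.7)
The plan is to reduce all four commutator identities in \eqref{Eqn::ProdHT::Claim} to the tensor-product structure of the operators ($\Hc^U = H^U \otimes \id^V$, $\Pc^U = P^U \otimes \id^V$ under Conventions~\ref{Conv::MixForm} and \ref{Conv::OpProd}) and then derive \eqref{Eqn::ProdHT::Claim2} from \eqref{Eqn::ProdHT::Claim} together with the single-factor homotopy formulae \eqref{htf} and the definition of $P^W$. Throughout, it suffices to test on a homogeneous smooth form $f(z,\zeta) = g(z,\zeta)\,d\bar z^I \wedge d\bar\zeta^K$ with $|I|=j$, $|K|=k$, since all operators are linear.

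For the two $\Pc^U$-identities I would first note that $\Pc^U$ vanishes on forms of positive $z$-degree by Convention~\ref{Conv::MixForm}, so both sides are zero when $j\ge 1$ (using also that $\dbar_z$ raises $z$-degree and $\dbar_\zeta$ preserves it). When $j=0$, commutation with $\dbar_\zeta$ is automatic since $P^U$ acts only on the $z$-variable and continuously on $C^\infty(\overline U)$; the key identity $\dbar_z P^U g = 0$ for a function $g$ follows by applying the $q=1$ case of \eqref{htf} to the $(0,1)$-form $\dbar g$, which yields $\dbar H^U_1 \dbar g = \dbar g$ and hence $\dbar(g - H^U_1 \dbar g) = 0$. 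The statement for $\Pc^V$ is identical.

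The main computation is the anticommutation $\dbar_\zeta \Hc^U = -\Hc^U \dbar_\zeta$. Since $H^U_j$ is linear and continuous on $C^\infty(\overline U;\wedge^{0,j})$, it commutes with $\partial/\partial\bar\zeta^\ell$; thus both sides applied to $f$ produce the same underlying $z$-form $H^U_j\bigl(\tfrac{\partial g}{\partial\bar\zeta^\ell}(\cdot,\zeta)\,d\bar z^I\bigr)$ wedged with $d\bar\zeta^\ell \wedge d\bar\zeta^K$ up to an overall sign. On the left, $d\bar\zeta^\ell$ must be pushed past the pure $z$-form $H^U_j(g(\cdot,\zeta)\,d\bar z^I)$ of degree $j-1$, contributing $(-1)^{j-1}$; on the right, $d\bar\zeta^\ell$ must be pushed past $d\bar z^I$ of degree $j$ before $\Hc^U$ is applied per \eqref{Eqn::OpProd}, contributing $(-1)^j$. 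The discrepancy accounts for the $-$ sign. The identity $\dbar_z \Hc^V = -\Hc^V \dbar_z$ is symmetric.

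Finally, for \eqref{Eqn::ProdHT::Claim2}, I would split $\dbar = \dbar_z + \dbar_\zeta$ and use the anticommutation just proved to collapse $\dbar\Hc^U + \Hc^U\dbar$ to $\dbar_z \Hc^U + \Hc^U \dbar_z$; applied to $f = g\,d\bar z^I \wedge d\bar\zeta^K$ this is precisely \eqref{htf} on $U$ evaluated pointwise in $\zeta$ on $g(\cdot,\zeta)\,d\bar z^I$, giving $f$ when $j\ge 1$ (where $\Pc^U f = 0$) and $\bigl(g - P^U g(\cdot,\zeta)\bigr)\,d\bar\zeta^K = f - \Pc^U f$ when $j=0$ (using $H^U_0 \equiv 0$ and $P^U g = g - H^U_1\dbar g$). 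The only real obstacle is the Koszul sign bookkeeping in the third paragraph; everything else is a direct translation of the tensor-product structure.
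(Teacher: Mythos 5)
Your proposal is correct and follows essentially the same route as the paper: the $\Pc^U$-identities via $\dbar_z P^U=0$ (from the $q=1$ case of \eqref{htf} applied to $\dbar g$) plus the degree-vanishing convention, the anticommutation via the same Koszul sign count $(-1)^{j-1}$ versus $(-1)^{j}$, and \eqref{Eqn::ProdHT::Claim2} by collapsing $\dbar=\dbar_z+\dbar_\zeta$ and applying \eqref{htf} slice-wise. No gaps.
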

\begin{proof}
    Recall that $P^U=\id_0-H^U_1\dbar_z$ is a projection for functions on $U$ to holomorphic functions on $U$. Therefore $\dbar_zP^U=0$. Since $P^U$ vanishes on $(0,q)$ forms when $q\ge1$, we get $P^U\dbar_z=0$. Together by Convention~\ref{Conv::OpProd} we have $\dbar_z\Pc^U=\Pc^U\dbar_z=0$.     
    Since $\Pc^U$ only acts on $z$-variable, using \eqref{Eqn::OpProd} as well we get $\dbar_\zeta\Pc^U=\Pc^U\dbar_\zeta$. Together we have $\dbar_{z,\zeta}\Pc^U=\Pc^U\dbar_{z,\zeta}$. The same argument yields $\dbar_{z,\zeta}\Pc^V=\Pc^V\dbar_{z,\zeta}$.
    
    Next, for a form $f(z,\zeta)=f_{JK}(z,\zeta)d\bar z^J\wedge d\bar\zeta^K$, the $\Hc^U(f_{JK}d\bar z^J)$ is a $(0,|J|-1)$ form. By a direct computation and \eqref{Eqn::OpProd},
    \begin{align*}
        &\dbar_\zeta\Hc^Uf=\dbar_\zeta\Hc^U(f_{JK}d\bar z^J\wedge d\bar\zeta^K)=\sum_{k=1}^{n_V}d\bar\zeta^k\wedge\Coorvec{\bar\zeta^k}\Hc^U(f_{JK}d\bar z^J)\wedge d\bar\zeta^K
        \\
        =&(-1)^{|J|-1}\sum_{k=1}^{n_V}\Coorvec{\bar\zeta^k}\Hc^U(f_{JK}d\bar z^J)\wedge d\bar\zeta^k\wedge d\bar\zeta^K=(-1)^{|J|-1}\sum_{k=1}^{n_V}\Hc^U\Big(\frac{\partial f_{JK}}{\partial\bar\zeta^k}d\bar z^J\wedge d\bar\zeta^k\wedge d\bar\zeta^K\Big)
        \\
        =&(-1)^{|J|-1}(-1)^{|J|}\sum_{k=1}^{n_V}\Hc^U\Big(d\bar\zeta^k\wedge\frac{\partial f_{JK}}{\partial\bar\zeta^k}d\bar z^J \wedge d\bar\zeta^K\Big)=-\Hc^U\dbar_\zeta(f_{JK}d\bar z^J\wedge d\bar\zeta^K)=-\Hc^U\dbar_\zeta f.
    \end{align*}
    We get $\dbar_\zeta\Hc^U=-\Hc^U\dbar_\zeta$. By swapping $(z,U)$ and $(\zeta,V)$ we get $\dbar_z\Hc^V=\Hc^V\dbar_z$. This completes the proof of \eqref{Eqn::ProdHT::Claim}.

    Since by assumption $\id^U-P^U=\dbar_zH^U+H^U\dbar_z$ and $\id^V-P^V=\dbar_\zeta H^V+H^V\dbar_\zeta$. Combing them with \eqref{Eqn::ProdHT::Claim} and Convention~\ref{Conv::OpProd}, we have \eqref{Eqn::ProdHT::Claim2}.
\end{proof}
\begin{proof}[Proof of Theorem~\ref{Thm::ProdHT}]
First we note that $\Pc^{U\times V},\Hc^{U\times V}$ from \eqref{Eqn::ProdHT::DefPH} are always defined on $C^\infty(\overline{U\times V};\wedge^{0,\bullet})$. Indeed, for $W\in\{U,V\}$, a continuous linear operator $T^W:C^\infty(\overline W)\to\Ds'(W)$ can be lifted as a linear operator $\widetilde T^W:C_c^\infty(\C^{n_W})\to\Ds'(W)$ via a  continuous extension operator $C^\infty(\overline W)\to C_c^\infty(\C^{n_W})$. By the Schwartz Kernel Theorem (see e.g. \cite[Theorem~51.7]{TrevesBook}) $\widetilde T^U\otimes\widetilde T^V:C_c^\infty(\C^{n_U}\times\C^{n_V})\to\Ds'(U\times V)$ is continuous. Clearly $((\widetilde T^U\otimes\widetilde T^V)\tilde f)|_{U\times V}=(T^U\otimes T^V)(\tilde f|_{U\times V})$ for all $\tilde f\in C_c^\infty(\C^{n_U+n_V})$, we get the continuity $T^U\otimes T^V:C^\infty(\overline{U\times V})\to\Ds'(U\times V)$. Take $T^U\in\{P^U,H^U\}$ and $T^V\in\{P^V,H^V\}$ we get the definedness. 

    Using \eqref{Eqn::ProdHT::Claim2} for every $f\in C^\infty(\overline{U\times V};\wedge^{0,\bullet})$,
    \begin{align*}
        &\Pc^{U\times V}f+\dbar\Hc^{U\times V}f+\Hc^{U\times V}\dbar f=\Pc^U\Pc^Vf+\dbar(\Hc^U+\Pc^U\Hc^V)f+(\Hc^U+\Pc^U\Hc^V)\dbar f
        \\
        =&\Pc^U\Pc^Vf+(\dbar\Hc^U+\Hc^U\dbar)f+(\dbar\Pc^U\Hc^V+\Pc^U\Hc^V\dbar)f=\Pc^U\Pc^Vf +f-\Pc^Uf+\Pc^U(\dbar\Hc^V+\Hc^V\dbar)f
        \\
        =&\Pc^U\Pc^Vf+f-\Pc^Uf+\Pc^Uf-\Pc^U\Pc^Vf=f.
    \end{align*}
    This gives the proof of \ref{Item::ProdHT::HT}.

    For \ref{Item::ProdHT::Bdd}, by Corollary~\ref{Cor::LiftBdd} the boundedness assumption of  $P^U,H^U:W^{k,p}_z(U;\wedge^{0,\bullet})\to W^{k,p}_z(U;\wedge^{0,\bullet})$ and $P^U,H^U:L^p_z(U;\wedge^{0,\bullet})\to L^p_z(U;\wedge^{0,\bullet})$ implies the boundedness of $\Pc^U,\Hc^U:W^{k,p}_{z,\zeta}(U\times V;\wedge^{0,\bullet})\to W^{k,p}_{z,\zeta}(U\times V;\wedge^{0,\bullet})$. The same argument yields the boundedness $\Hc^V:W^{k,p}_{z,\zeta}(U\times V;\wedge^{0,\bullet})\to W^{k,p}_{z,\zeta}(U\times V;\wedge^{0,\bullet})$. 
    By \eqref{Eqn::ProdHT::DefPH} with compositions, we conclude that $\Hc^{U\times V}:W^{k,p}_{z,\zeta}(U\times V;\wedge^{0,\bullet})\to W^{k,p}_{z,\zeta}(U\times V;\wedge^{0,\bullet})$ is bounded.
    
    If we further assume $P^V$ is bounded in both $L^p_\zeta(V)\to L^p_\zeta(V)$ and $W^{k,p}_\zeta(V)\to W^{k,p}_\zeta(V)$, then by Corollary~\ref{Cor::LiftBdd} $\Pc^V:W^{k,p}_{z,\zeta}(U\times V)\to W^{k,p}_{z,\zeta}(U\times V)$ is bounded as well. Taking compositions with $\Pc^U$, we obtain the boundedness $\Pc^{U\times V}:W^{k,p}_{z,\zeta}(U\times V)\to W^{k,p}_{z,\zeta}(U\times V)$, completing the proof.
\end{proof}

\begin{proof}[Proof of Theorem \ref{Thm::HTGeneral}] Identifying $\Pc$ as the operator on forms of all degrees following Convention~\ref{Conv::MixForm} and Remark~\ref{Rmk::MixForm},  we can write the homotopy formulae as $f=\Pc f+\dbar \Hc f+\Hc\dbar f$ for mixed degree form $f$. 

The proof can be done by induction on $m$. The based case $m=1$ follows from the assumption \eqref{Eqn::HTGeneral::htf1}.     Suppose the case $m-1$ is obtained.
    For the case $m$, take $U:=\Omega_1\times\dots\times\Omega_{m-1}\subset\C^{n_1+\dots+n_{m-1}}$ and $V:=\Omega_m\subset\C^{n_m}$. Since the product of bounded Lipschitz domains is still bounded Lipschitz, we see that $U$ and $V$ are both bounded Lipschitz domains as well. By the induction hypothesis there are linear operators $H^U=\sum_{q=1}^{n_1+\dots+n_{m-1}}H^U_q$ on $C^\infty(\overline{U};\wedge^{0,\bullet})$ and  $H^V=\sum_{q=1}^{n_m}H^V_q$ on $C^\infty(\overline{V};\wedge^{0,\bullet})$ (in terms of Conventions~\ref{Conv::MixForm} and \ref{Conv::OpProd}), such that $g=P^Ug+\dbar H^Ug+H^U\dbar g$ for all $g\in C^\infty(\overline{U};\wedge^{0,\bullet})$, $h=P^Vh+\dbar H^Vh+H^V\dbar h$ for all $h\in C^\infty(\overline{V};\wedge^{0,\bullet})$, where $P^U=\id_0^U-H^U_1\dbar$ and $P^V=\id_0^V-H^V_1\dbar$ are skew Bergman projections on functions (in $U$ and $V$ respectively).    

    Applying Theorem~\ref{Thm::ProdHT} to such $P^U,H^U,P^V,H^U$ we obtain the desired operators $\Pc^\Omega$ and $\Hc^\Omega=(\Hc_q^\Omega)_{q=1}^{n_1+\dots+n_m}$ on $\Omega=U\times V$.  By Theorem~\ref{Thm::ProdHT}~\ref{Item::ProdHT::HT} $f=\Pc^\Omega f+\dbar\Hc^\Omega f+\Hc^\Omega\dbar f$ for all $f\in C^\infty(\overline\Omega;\wedge^{0,\bullet})$, which gives \eqref{Eqn::HTGeneral::htfp}.  

    \smallskip
    Suppose further \eqref{Eqn::HTGeneral::est} holds, that is, for some $k\in\Z$ and $1<p<\infty$, $P^U,P^V,H^U,H^V$ are all  bounded in $W^{k,p}\to W^{k,p}$ and $L^p\to L^p$.    
    By  Theorem~\ref{Thm::ProdHT}~\ref{Item::ProdHT::Bdd} the $W^{k,p}$ and $L^p$ boundeness for $P^U,H^U,P^V,H^U$ implies the $W^{k,p}$ boundedness for $\Pc^\Omega$ and $\Hc^\Omega$. \ref{Item::HTGeneral::SobBdd}  is thus proved. 
\end{proof}

\begin{rem}\label{Rmk::ActualNWFormula}
    By expanding the induction, the formulae we have for $\Omega=\Omega_1\times\dots\times\Omega_r$ are
    \begin{align*}
        \Pc^\Omega=&\Pc^{\Omega_1}\dots\Pc^{\Omega_m}=P^{\Omega_1}\otimes\dots\otimes P^{\Omega_m};
        \\
        \Hc^\Omega=&\Hc^{\Omega_1}+\Pc^{\Omega_1}\Hc^{\Omega_2}+\dots+\Pc^{\Omega_1}\dots\Pc^{\Omega_{m-1}}\Hc^{\Omega_m}
        \\
        &=H^{\Omega_1}\otimes\id^{\Omega_2\times\dots\times\Omega_m}+P^{\Omega_1}\otimes H^{\Omega_2}\otimes\id^{\Omega_3\times\dots\times\Omega_m}+\dots+ P^{\Omega_1}\otimes \dots\otimes P^{\Omega_{m-1}}\otimes H^{\Omega_m}.
    \end{align*}
    For a given degree $(0, q)$, the precise expression of $\Hc^\Omega_q$ follows from the same deduction to Remark~\ref{Rmk::ExactHqforProd}.
\end{rem}

\section{Proof of Theorem~\ref{Thm::DbarThm}}

In this section, we check that for each factor $\Omega_j$  under consideration   in Theorem~\ref{Thm::DbarThm}, there exist linear operators $(H^{\Omega_j}_q)_{q=1}^{n_j}$ and $P$ which satisfy the homotopy formulae  and has the desired boundedness in all Sobolev spaces.

\begin{prop}\label{Prop::HTPlanar}
    Let $\Omega\subset\C$ be a bounded Lipschitz domain. Then there is an operator $H_1:\Ss'(\Omega;\wedge^{0,1})\to\Ss'(\Omega)$ such that $\dbar H_1=\id$ and $H_1:W^{k,p}(\Omega;\wedge^{0,1})\to W^{k+1,p}(\Omega)$ is bounded for all $k\in\Z$ and $1<p<\infty$. In particular $P:=\id-H_1\dbar$ satisfies $P:W^{k,p}(\Omega)\to W^{k,p}(\Omega)$ for all $k\in\Z$ and $1<p<\infty$, and we have the homotopy formula $f=Pf+\dbar Hf+H\dbar f$ for $f\in \Ss'(\Omega, \wedge^{0,\bullet})$.
\end{prop}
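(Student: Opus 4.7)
The natural candidate is $H_1(f\, d\bar z) := (KEf)|_\Omega$, where $K$ is the solid Cauchy transform
$Kg(z) := -\tfrac{1}{\pi}\int_\C \tfrac{g(\zeta)}{\zeta-z}\,dA(\zeta)$, understood as convolution with the locally integrable kernel $-1/(\pi z) \in L^1_{\loc}(\C)$, and $E:\Ss'(\Omega)\to\Es'(\C)$ is the Rychkov extension from Lemma~\ref{Lem::Extension}. Since $-1/(\pi z)$ is the fundamental solution of $\dbar$, for every $u\in\Es'(\C)$ one has $\dbar Ku=u$ and $\partial Ku=Tu$ where $T$ is the Beurling transform. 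The identity $\dbar H_1=\id$ on $\Ss'(\Omega;\wedge^{0,1})$ is then immediate: $\dbar(KEf)|_\Omega=(Ef)|_\Omega=f$. Since $(0,1)$-forms on $\C^1$ are top-degree (so $\dbar f=0$ automatically) and $P$ vanishes on $(0,1)$-forms by Convention~\ref{Conv::MixForm}, the full mixed-degree formula $f=Pf+\dbar H_1f+H_1\dbar f$ is automatic from $\dbar H_1=\id$ and the definition $P=\id-H_1\dbar$.

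For the Sobolev estimate $H_1:W^{k,p}(\Omega;\wedge^{0,1})\to W^{k+1,p}(\Omega)$, the key input is that $T:W^{k,p}(\C)\to W^{k,p}(\C)$ is bounded for every $k\in\Z$ and $1<p<\infty$: for $k\ge 0$ this follows from Calder\'on–Zygmund theory together with the fact that $T$, being a Fourier multiplier, commutes with derivatives; for $k<0$ it follows by duality via Remark~\ref{Rmk::RmkWkp}\ref{Item::RmkWkp::Dual}, as $T^*$ is of the same Calder\'on–Zygmund type. For $k\ge 0$ I would pick a multi-index $\alpha$ with $|\alpha|=k$; since convolution commutes with $D^\alpha$, one has $D^\alpha(KEf)=K(D^\alpha Ef)$ with $D^\alpha Ef\in L^p_c(\C)$. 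Young's inequality on the locally $L^1$ kernel gives $\|K(D^\alpha Ef)\|_{L^p(\Omega')}\lesssim\|D^\alpha Ef\|_{L^p}$ on any fixed bounded $\Omega'\supset\overline\Omega$, while $\dbar K(D^\alpha Ef)=D^\alpha Ef$ and $\partial K(D^\alpha Ef)=T(D^\alpha Ef)$ both lie in $L^p(\C)$. The usual Sobolev characterization then places $K(D^\alpha Ef)\in W^{1,p}(\Omega')$, whence $KEf\in W^{k+1,p}(\Omega)$ with the required bound.

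For $k<0$ I would write $Ef=\sum_{|\alpha|\le|k|}D^\alpha g_\alpha$ with $g_\alpha\in L^p_c(\C)$; such a representation exists because $Ef$ is compactly supported, and can be obtained from any a priori $L^p(\C)$-representation by multiplying with a cutoff that equals $1$ near $\supp Ef$ and absorbing the Leibniz error terms into lower-order $g_\alpha$'s. Then $KEf=\sum_\alpha D^\alpha Kg_\alpha$ by commuting $D^\alpha$ with the convolution, each $Kg_\alpha\in W^{1,p}(\Omega')$ by the base case, and consequently $D^\alpha Kg_\alpha\in W^{1-|\alpha|,p}(\Omega)\subseteq W^{k+1,p}(\Omega)$; summing yields the estimate. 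The bound for $P=\id-H_1\dbar$ is then immediate, since $\dbar:W^{k,p}(\Omega)\to W^{k-1,p}(\Omega;\wedge^{0,1})$ is followed by $H_1:W^{k-1,p}\to W^{k,p}$. The main obstacle will be the $k<0$ case: rigorously setting up the distributional convolution $KEf$, justifying the commutation $D^\alpha K=KD^\alpha$ on general compactly supported distributions, and producing the compactly supported representation of $Ef$ in negative-Sobolev norms.
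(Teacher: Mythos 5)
Your construction is the same as the paper's --- $H_1(g\,d\bar z)=(\tfrac1{\pi z}\ast Eg)|_\Omega$ with $E$ the Rychkov extension --- and your verification of $\dbar H_1=\id$ and of the mixed-degree homotopy formula matches the paper's. Where you genuinely differ is in the proof of the gain $W^{k,p}\to W^{k+1,p}$. The paper truncates the kernel by a cutoff $\chi$ and shows that the Fourier transform of $(I-\Delta)^{1/2}(\chi\cdot\tfrac1{\pi z})$ is a H\"ormander--Mikhlin multiplier, which yields $H^{s,p}(\C)\to H^{s+1,p}(\C)$ for all real $s$ in one stroke and then restricts, using $H^{k,p}=W^{k,p}$ on Lipschitz domains. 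You instead argue at the level of integer $k$: for $k\ge0$ you commute $D^\alpha$ through the convolution and combine Young's inequality (for the $L^p$ bound on a bounded set) with $L^p$-boundedness of the Beurling transform (for the gradient bound); for $k<0$ you produce a compactly supported representation $Ef=\sum_{|\alpha|\le|k|}D^\alpha g_\alpha$ with $g_\alpha\in L^p_c(\C)$ and reduce to the base case. Both routes are sound, and the operator-norm bookkeeping in your negative-order step works with the paper's definition \eqref{Eqn::DefnWkp::k-Norm} since $D^\alpha:W^{1,p}\to W^{1-|\alpha|,p}$ and $W^{-m,p}\hookrightarrow W^{-m',p}$ for $m\le m'$ are immediate from that definition. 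Your version is more elementary (classical Calder\'on--Zygmund theory rather than the multiplier theorem on Bessel potential spaces) and avoids fractional spaces entirely; the paper's version buys the fractional statement $H^{s,p}\to H^{s+1,p}$ directly, which it records in Remark~\ref{Rmk::RmkHsp}~\ref{Item::RmkHsp::Interpo} and exploits for the H\"older estimates in Remark~\ref{Rmk::Holder} --- though that could also be recovered from your integer-order result by complex interpolation. The technical points you flag at the end (commuting $D^\alpha$ with convolution against elements of $\Es'(\C)$, and the cutoff-plus-Leibniz trick for a compactly supported representation of $Ef$) are standard distribution theory and pose no real obstruction.
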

Note that since there are no $(0,2)$ forms in $\C^1$, we have $H=H_1$. In particular, $f=Pf+H_1\dbar f$ for functions $f\in\Ss'(\Omega)$ and $f=\dbar H_1f$ for $(0,1)$ forms $f\in\Ss'(\Omega;\wedge^{0,1})$.
\begin{proof}
    Take a bounded open set $U\Supset\Omega$. By Lemma~\ref{Lem::Extension} there exists an extension operator $E:\Ss'(\Omega)\to \Es'(U)$ such that $E:W^{k,p}(\Omega)\to W^{k,p}_c(U)$ is bounded for all $k\in\Z$ and $1<p<\infty$. Take  $$H_1(gd\bar z):=(\tfrac1{\pi z}\ast Eg)|_\Omega.$$ Since $\frac1{\pi z}$ is the fundamental solution to $\Coorvec{\bar z}$, we get $\dbar H_1(gd\bar z)=gd\bar z$ for all $g\in\Ss'(\Omega)$. 
    The boundedness $H_1:W^{k,p}(\Omega;\wedge^{0,1})\to W^{k+1,p}(\Omega)$ is standard, from which one simultaneously  obtains the boundedness $P:W^{k,p}(\Omega)\to W^{k,p}(\Omega)$. We give a version of the proof here.

    Since $E:W^{k,p}(\Omega)\to W^{k,p}_c(U)$ is bounded, it suffices to show the boundedness $[g\mapsto\frac1{\pi z}\ast g]:W^{k,p}_c(U)\to W^{k+1,p}(\Omega)$. Since $U$ is bounded, say $U\subset B(0,R)$, we can take a $\chi\in C_c^\infty(\C)$ such that $\chi|_{B(0,2R)}\equiv1$, which allows $(\frac1{\pi z}\ast g)|_\Omega=((\chi\cdot\frac1{\pi z})\ast g)|_\Omega$. Thus the proposition is further reduced to showing  $[g\mapsto(\chi\cdot\frac1{\pi z})\ast g]:W^{k,p}(\C)\to W^{k+1,p}(\C)$ is bounded.
    
    Recalling that for the Fourier transform $\hat f(\xi,\eta)=\int_\C f(x+iy)e^{-2\pi i(x\xi+y\eta)}dxdy$, we see that $$m(\xi,\eta):=\big((I-\Delta)^{\frac12}(\chi\cdot\tfrac1{\pi z})\big)^\wedge(\xi,\eta)=\tfrac1{\pi i}\sqrt{1+4\pi^2(|\xi|^2+|\eta|^2)}\cdot\Big(\check\chi\ast\tfrac1{\xi+i\eta}\Big).$$
    This is a bounded smooth function in $\R^2_{\xi,\eta}$ such that $\sup_{\xi,\eta}\sqrt{\xi^2+\eta^2}|\nabla m(\xi,\eta)|<\infty$, which is in particular a H\"ormander-Mikhlin multiplier. By the H\"ormander-Mikhlin multiplier theorem (see e.g. \cite[Section~6.2.3]{GrafakosClassical}) $[g\mapsto (I-\Delta)^{\frac12}(\chi\cdot\frac1{\pi z})\ast g]:L^p(\C)\to L^p(\C)$ is bounded for all $1<p<\infty$.

    Using the Sobolev-Bessel spaces in Definition~\ref{Defn::Hsp} and the fact that $(I-\Delta)^\frac s2(\check m\ast g)=\check m\ast (I-\Delta)^\frac s2g$, we conclude that $[g\mapsto (\chi\cdot\frac1{\pi z})\ast g]:H^{s,p}(\C)\to H^{s+1,p}(\C)$ is bounded for all $s\in\R$ and $1<p<\infty$. The  $W^{k,p}$ boundedness follows from Remark~\ref{Rmk::RmkHsp}~\ref{Item::RmkHsp::Wkp=Hkp}.
\end{proof}

\begin{prop}\label{Prop::HTSPsiCX}
    Let $\Omega\subset\C^n$ be a bounded domain which is either $C^2$ strongly pseudoconvex or $C^{1,1}$ strongly $\C$-linearly convex. There are   linear operators $P:\Ss'(\Omega)\to\Ss'(\Omega)$ and $H_q:\Ss'(\Omega;\wedge^{0,q})\to\Ss'(\Omega;\wedge^{0,q-1})$ for $1\le q\le n$, such that $f=Pf+\dbar Hf+H\dbar f$ for all $f\in \Ss'(\Omega, \wedge^{0,\bullet})$, and $P,H:W^{k,p}(\Omega;\wedge^{0, \bullet })\to W^{k,p}(\Omega;\wedge^{0, \bullet })$ are bounded for all $k\in\Z$ and $1<p<\infty$.
\end{prop}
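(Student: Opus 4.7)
The plan is to produce the operators $P$ and $H_q$ via explicit Koppelman-type integral representations, verify the positive-order Sobolev bounds from known kernel estimates, and then upgrade to the full integer range by a duality argument using Remark~\ref{Rmk::RmkWkp}~\ref{Item::RmkWkp::Dual}.

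First, I would construct the homotopy operators on smooth forms. For $C^2$ strongly pseudoconvex $\Omega$, the Henkin-Ramirez support function $\Phi(\zeta,z)$, which is holomorphic in $z$ and bounded below away from the diagonal of $b\Omega \times \overline\Omega$, drives a Koppelman formula producing operators $H_q$ on $C^\infty(\overline\Omega;\wedge^{0,q})$ with $\dbar H_q + H_{q+1}\dbar = \id$ for $q\ge 1$; setting $P := \id - H_1\dbar$ gives a projection to holomorphic functions. This construction, refined so that $P$ and $H_q$ admit Sobolev estimates in all orders, is carried out by Yao in \cite{YaoSPsiCXC2}. For $C^{1,1}$ strongly $\C$-linearly convex $\Omega$, the same scheme applies with the Cauchy-Leray support function $\Phi(\zeta,z) = \langle \partial\rho(\zeta),\zeta-z\rangle$: the strong $\C$-linear convexity hypothesis provides the quantitative lower bound $|\Phi(\zeta,z)| \gtrsim |\zeta-z|^2$ for $\zeta \in b\Omega$ and $z \in \overline\Omega$, and the resulting Koppelman kernels obey the same structural estimates as in the strongly pseudoconvex case (compare \cite{GongHolderSPsiCXC2}).

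Second, I would verify the positive-order Sobolev boundedness $P, H_q : W^{k,p}(\Omega;\wedge^{0,\bullet}) \to W^{k,p}(\Omega;\wedge^{0,\bullet})$ for $k \geq 0$. This reduces to standard kernel estimates: the Koppelman kernel behaves like a fractional integral of order $1/2$ along the complex-normal direction and like a standard fractional integral transverse to it, and in both settings the resulting mapping properties are classical. To extend to $k < 0$, I would use the duality $W^{-k,p}(\Omega) \cong W^{k,p'}_0(\Omega)'$ with $p' = p/(p-1)$. The formal transposes $H_q^t, P^t$ with respect to the $L^2(\Omega)$-pairing are integral operators whose kernels are obtained by swapping the roles of $\zeta$ and $z$; when acting on compactly supported test forms in $W^{k,p'}_0(\Omega)$, these transposes are bounded $W^{k,p'}_0(\Omega) \to W^{k,p'}(\Omega)$ by the same kernel estimates. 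Dualizing then yields the $W^{-k,p} \to W^{-k,p}$ bound on $H_q$ and $P$.

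The main obstacle is making the duality step rigorous: one needs to verify that the formal transpose actually coincides with the $L^2$-adjoint when pairing against $W^{k,p'}_0(\Omega)$ forms (so that no boundary terms from integration by parts survive), and that the swapped kernel still satisfies the requisite positive-order Sobolev estimates. For the $C^2$ strongly pseudoconvex case this program is worked out in detail in \cite{YaoSPsiCXC2}; for the $C^{1,1}$ strongly $\C$-linearly convex case the same method applies, with the only nontrivial verification being that the Cauchy-Leray kernels retain sufficient smoothness away from the diagonal under the weaker $C^{1,1}$ boundary regularity. Once these estimates are in hand, the identity $f = Pf + \dbar Hf + H\dbar f$ extends from $C^\infty(\overline\Omega;\wedge^{0,\bullet})$ to all of $\Ss'(\Omega;\wedge^{0,\bullet})$ by continuity together with Remark~\ref{Rmk::RmkWkp}~\ref{Item::RmkWkp::Limit}.
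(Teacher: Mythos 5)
The paper's entire proof of this proposition is the citation to \cite[Theorem~1.1]{YaoSPsiCXC2}, so resting the construction and the positive-order estimates on that reference (with the Henkin--Ramirez, resp.\ Cauchy--Leray, support function and the lower bound $|\Phi(\zeta,z)|\gtrsim|\zeta-z|^2$ in the strongly $\C$-linearly convex case) matches what is actually done. The genuine gap is in your passage to $k<0$ by duality. To define $H_qf$ for $f\in W^{-k,p}(\Omega)=W_0^{k,p'}(\Omega)'$ via $\langle H_qf,\phi\rangle:=\langle f,H_q^t\phi\rangle$, you need $H_q^t$ to map test forms into $W_0^{k,p'}(\Omega)$, i.e.\ into the \emph{closure} of $C_c^\infty$ in $W^{k,p'}$; the boundedness $W_0^{k,p'}(\Omega)\to W^{k,p'}(\Omega)$ that you claim is not enough, since a distribution in $W^{-k,p}(\Omega)$ cannot in general be paired against a function that does not vanish to order $k$ at $b\Omega$. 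The transposed Henkin/Leray--Koppelman kernels do not produce such vanishing, so dualizing only yields an operator on $\widetilde H^{-k,p}(\overline\Omega)$, the dual of $W^{k,p'}(\Omega)$, and one would still have to show that this operator kills distributions supported on $b\Omega$ in order to descend to $W^{-k,p}(\Omega)$. This is exactly the obstruction the paper alludes to when it notes that the canonical solution operators fail on distributions of sufficiently negative order; it is not a formality one can wave through.

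The mechanism that actually works --- the one in \cite{YaoSPsiCXC2}, replicated in the paper's Appendix for convex domains of finite type --- is not duality. One applies Rychkov's extension operator $\Ec:W^{-k,p}(\Omega)\to W^{-k,p}_c(\Uc)$, writes the homotopy operator as an interior Bochner--Martinelli term plus a term involving the commutator $[\dbar,\Ec]f$ supported in $\overline{\Uc\backslash\Omega}$, decomposes that commutator as $\sum_{|\alpha|\le k}D^\alpha g_\alpha$ with $g_\alpha$ of positive smoothness via anti-derivative operators as in Proposition~\ref{Prop::Technical}~\ref{Item::Technical::Antidev}, and then integrates by parts to transfer $D^\alpha$ onto the kernel, whose differentiated pieces are controlled by weighted estimates of the type \eqref{Eqn::WeiEst::Topz}--\eqref{Eqn::WeiEst::Botzeta}. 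If you intend a self-contained argument rather than the citation, that is the route to take; the duality step as you have written it would fail.
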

See  \cite[Theorem~1.1]{YaoSPsiCXC2}. In fact we have the boundedness $H_q:H^{s,p}(\Omega;\wedge^{0,q})\to H^{s+1/2,p}(\Omega;\wedge^{0,q-1})$ for $1\le q\le n-1$ and $H_n:H^{s,p}(\Omega;\wedge^{0,n})\to H^{s+1,p}(\Omega;\wedge^{0,n-1})$ for all $s\in \mathbb R$ and $1<p<\infty$.

\begin{prop}\label{Prop::HTCXFinite}
    Let $\Omega\subset\C^n$ be a smooth convex domain of finite type. There are linear operators   $P:\Ss'(\Omega)\to\Ss'(\Omega)$ and $H_q:\Ss'(\Omega;\wedge^{0,q})\to\Ss'(\Omega;\wedge^{0,q-1})$ for $1\le q\le n$, such that $f=Pf+\dbar Hf+H\dbar f$ for all $f\in \Ss'(\Omega, \wedge^{0,\bullet})$, and $P,H:W^{k,p}(\Omega;\wedge^{0, \bullet })\to W^{k,p}(\Omega;\wedge^{0,\bullet})$ are bounded for all $k\in\Z$ and $1<p<\infty$.
\end{prop}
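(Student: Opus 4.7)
The plan is to invoke the integral homotopy operators constructed by Yao~\cite{YaoConvexFiniteType} for smooth convex domains of finite type. That paper builds explicit Berndtsson--Andersson / Bharali--Stensones style Cauchy--Fantappiè operators $H_q$ from a Diederich--Fornaess type holomorphic support function adapted to the finite-type geometry, and shows that they satisfy the pointwise homotopy identity $f = \dbar H_q f + H_{q+1} \dbar f$ on $C^\infty(\overline\Omega;\wedge^{0,q})$, with sharp fractional Sobolev estimates yielding in particular $H_q : W^{k,p}(\Omega;\wedge^{0,q}) \to W^{k,p}(\Omega;\wedge^{0,q-1})$ for all $k \ge 0$ and $1 < p < \infty$. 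The skew Bergman projection $P := \id - H_1 \dbar$ enjoys analogous positive-order bounds.

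First I would record these operators and verify the $C^\infty$-level homotopy identity. Second I would extend to negative Sobolev indices via duality: by Remark~\ref{Rmk::RmkWkp}~\ref{Item::RmkWkp::Dual}, a bound $H_q : W^{-k,p} \to W^{-k,p}$ with $k \ge 0$ is equivalent to a bound on the formal adjoint $H_q^\ast : W^{k,p'}_0 \to W^{k,p'}_0$. The adjoint is again an integral operator whose kernel is obtained from the original Cauchy--Fantappiè data by transposition (and, if needed, by adjusting the weight parameters in the Berndtsson--Andersson construction), and it lies in the same class treated in \cite{YaoConvexFiniteType}. Thus the same positive-order estimate applies to $H_q^\ast$ and gives the required negative-order bound, and similarly for $P$, covering the full range $k \in \Z$, $1 < p < \infty$.

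Third, once the $W^{k,p}\to W^{k,p}$ bound holds for every integer $k$, Remark~\ref{Rmk::RmkWkp}~\ref{Item::RmkWkp::Limit} (applicable since $\Omega$ is bounded Lipschitz) gives $\Ss'(\Omega) = \bigcup_k W^{-k,p}(\Omega)$, so $H_q$ and $P$ extend to $\Ss'(\Omega;\wedge^{0,\bullet})$. The homotopy formula $f = Pf + \dbar H f + H \dbar f$ then propagates from $C^\infty(\overline\Omega;\wedge^{0,\bullet})$ to $\Ss'(\Omega;\wedge^{0,\bullet})$ by density of $C^\infty(\overline\Omega)$ in the relevant negative Sobolev spaces together with continuity of $\dbar$ on distributions.

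The hardest step is the negative-index estimate. Positive-order bounds on $H_q$ itself are supplied by \cite{YaoConvexFiniteType}, but transferring them to $H_q^\ast$ requires an explicit kernel for the adjoint and a check that it still belongs to the Bharali--Stensones / Cauchy--Fantappiè class for which the convex finite-type estimates apply. One could alternatively bypass duality by estimating the kernel of $H_q$ directly on negative Sobolev spaces using the McNeal pseudometric, but either route rests on the same delicate geometric input from the convex finite-type theory.
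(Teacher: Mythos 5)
There are two genuine gaps. The main one is your assertion that the skew Bergman projection $P=\id-H_1\dbar$ ``enjoys analogous positive-order bounds.'' It does not follow analogously: on a convex domain of finite type $m$ the operator $H_1$ gains only a fractional derivative of order $1/m$, so $H_1\dbar$ applied to $W^{k,p}$ lands only in $H^{k-1+1/m,p}$ and loses $1-1/m$ derivatives. The $W^{k,p}\to W^{k,p}$ boundedness of $P$ is therefore not a corollary of the bounds for $H_1$; it is precisely the content of the paper's Appendix (Theorem~\ref{Thm::PforCXFinite}). The argument there rewrites $\Pc f=\int_{\Uc\backslash\overline\Omega}F(z,\cdot)\wedge[\dbar,\Ec]f$ with the Cauchy--Fantappi\`e form $F=B_0-\dbar_\zeta K_0$, splits both $F$ and the commutator $[\dbar,\Ec]f$ into tangential and normal components, and exploits a cancellation: the normal component of the kernel gains only $1/m$ instead of a full derivative (Proposition~\ref{Prop::WeiEst}), but it is paired against the tangential component of the commutator, which is better than the full commutator by exactly the missing $1-1/m$ derivatives (Proposition~\ref{Prop::Technical}~\ref{Item::Technical::TangComm}), and vice versa. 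Without some such cancellation your proof of the $P$-estimate is missing entirely.

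The second gap is the duality argument for negative indices, which is both unnecessary and unjustified as written. Unnecessary, because the operators of \cite{YaoConvexFiniteType} are built from Rychkov's extension operator $\Ec:\Ss'(\Omega)\to\Es'(\Uc)$ and are already defined and bounded on $H^{s,p}(\Omega)$ for \emph{every} $s\in\R$, negative included; the negative-order estimates are obtained directly by writing a compactly supported distribution of negative order as $\sum_{|\alpha|\le k}D^\alpha\Sc^{k,\alpha}g$ (Proposition~\ref{Prop::Technical}~\ref{Item::Technical::Antidev}) and integrating the derivatives by parts onto the kernel, not by duality. Unjustified, because the adjoint of $f\mapsto\int_\Uc B\wedge\Ec f+\int_{\Uc\backslash\overline\Omega}K\wedge[\dbar,\Ec]f$ involves the adjoints of $\Ec$ and $[\dbar,\Ec]$ together with a transposed kernel whose holomorphy sits in the integration variable rather than the free one; the claim that this ``lies in the same class'' covered by the convex finite-type estimates would itself require proof, and you would additionally need the adjoint to preserve $W_0^{k,p'}$ in order to invoke $W^{-k,p}=(W_0^{k,p'})'$. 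Your final step --- extending the homotopy formula from $C^\infty(\overline\Omega;\wedge^{0,\bullet})$ to $\Ss'(\Omega;\wedge^{0,\bullet})$ by density together with Remark~\ref{Rmk::RmkWkp}~\ref{Item::RmkWkp::Limit} --- is fine and matches the paper.
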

The boundedness of $H_q$ was obtained in  \cite{YaoSPsiCXC2}. For the boundedness of $P=\id_0-H_1\dbar$, we postpone the proof to Theorem~\ref{Thm::PforCXFinite} in Section~\ref{Section::CXFinite}. A slightly more general version of this statement using Triebel-Lizorkin spaces can be found in  the arxiv version \cite[Appendix B]{YaoConvexFiniteType} with a similar argument as in the Appendix. 



Theorem~\ref{Thm::DbarThm} now follows directly from Theorem~\ref{Thm::HTGeneral} with Propositions~\ref{Prop::HTPlanar} - \ref{Prop::HTCXFinite}. We include the proof for completeness.

\begin{proof}[Proof of Theorem~\ref{Thm::DbarThm} and Corollary \ref{Cor::dbar}]Since on each $\Omega_j$ we have \eqref{Eqn::HTGeneral::htf1} and \eqref{Eqn::HTGeneral::est} for all  $k\in \mathbb Z$ and $1<p<\infty$ by Propositions~\ref{Prop::HTPlanar}
- \ref{Prop::HTCXFinite}, we obtain the linear operators  $\Pc$ and $\Hc$ as defined  in Theorem \ref{Thm::HTGeneral}, which satisfy  \eqref{Eqn::HTGeneral::htfp}, and are bounded  on $W^{k,p} $ for all $k\in \mathbb Z$ and $1<p<\infty$. Because $C^\infty(\overline\Omega)$ is dense in $W^{k,p}(\Omega)$ (see e.g. \cite[Lemma~A.14]{YaoSPsiCXC2} for $k\le0$), the homotopy formulae uniquely extends to all $f\in W^{k,p}(\Omega;\wedge^{0,\bullet})$ for $k\le0$ and $1<p<\infty$. By Remark~\ref{Rmk::RmkWkp}~\ref{Item::RmkWkp::Limit} again the homotopy formula \eqref{Eqn::HTGeneral::htfp}  holds for all $f\in \Ss'(\Omega;\wedge^{0,\bullet})$. This proves  Theorem~\ref{Thm::DbarThm}. Corollary \ref{Cor::dbar} is a direct consequence of Theorem~\ref{Thm::DbarThm}.
\end{proof}

\begin{rem}
    If one only focuses on  optimal $W^{k,p}$ estimates for $k\ge0$, we can also allow $\Omega_j$ in Theorem ~\ref{Thm::DbarThm} to be a smooth pseudoconvex domain  of finite type  in $\C^2$, or other pseudoconvex domains where the canonical solution operators $H_q:=\dbar^*N_q$ and the Bergman projection $P:=\id_0-\dbar^*N_1\dbar$ are bounded in $W^{k,p}$. See e.g. \cite[Corollaries~7.5 and 7.6]{ChangeNagelStein1992}.

    However if one further looks for $W^{k,p}$ estimates for small enough $k< 0$, the canonical solutions will not work. This is due to the ill-posedness of the $\dbar$-Neumann problem on space of distributions. See \cite[Lemma~A.32]{YaoSPsiCXC2}. 
\end{rem}

\begin{rem}[Near optimal H\"older estimates]\label{Rmk::Holder}
    If we use \eqref{Eqn::ProdHT::DefPH} for the H\"older spaces, then we have end point optimal H\"older estimates $\Hc^\Omega:C^{k,\alpha}(\Omega;\wedge^{0, \bullet })\to C^{k,\alpha-}(\Omega;\wedge^{0, \bullet })$ for all $k\ge0$ and $0<\alpha<1$.
    This can be done by Sobolev embeddings as follows.  

    Indeed, for every $\eps>0$ by taking $n/\eps<p<\infty$, we have  continuous  embeddings $C^{k,\alpha}(\Omega)\hookrightarrow H^{k+\alpha,p}(\Omega)\hookrightarrow C^{k,\alpha-\eps}(\Omega)$, see e.g. \cite[Remark~1.96 and Theorem~1.122]{TriebelTheoryOfFunctionSpacesIII}. From Remark~\ref{Rmk::RmkHsp}~\ref{Item::RmkHsp::Interpo} we obtain the boundedness $\Pc^\Omega,\Hc^\Omega:H^{k+\alpha,p}(\Omega;\wedge^{0, \bullet })\to H^{k+\alpha,p}(\Omega;\wedge^{0, \bullet })$. Thus $C^{k,\alpha}(\Omega;\wedge^{0, \bullet })\to C^{k,\alpha-\eps}(\Omega;\wedge^{0, \bullet })$ is bounded. Letting $\eps\to0^+$ we get the end point optimal H\"older bounds.

\end{rem}



\appendix
\section{Skew Bergman Projection on Convex Domains of Finite type}\label{Section::CXFinite}


In this section we briefly review the construction of homotopy formulae on convex domains of finite type from \cite{YaoConvexFiniteType} and complete the proof to  Proposition~\ref{Prop::HTCXFinite}. 
\begin{thm}\label{Thm::PforCXFinite}
    Let $\Omega\subset\C^n$ be a smooth convex domain of finite type. For the homotopy operators $\Hc_q:\Ss'(\Omega;\wedge^{0,q})\to\Ss'(\Omega;\wedge^{0,q-1})$ for $q=1,\dots,n$ given in \cite[Theorem~1.1]{YaoConvexFiniteType}, let $\Pc f:=f-\Hc_1\dbar f$ for $f\in\Ss'(\Omega)$. Then $\Pc:H^{s,p}(\Omega)\to H^{s,p}(\Omega)$ is bounded for all $s\in\R$ and $1<p<\infty$. In particular $\Pc: W^{k,p}(\Omega)\to W^{k,p}(\Omega)  $ is   bounded for $k\in\Z$ and $1<p<\infty$.
\end{thm}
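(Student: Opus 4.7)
The plan is to express $\Pc$ as an explicit boundary integral of Cauchy-Fantappie type and then invoke the kernel estimates developed in \cite{YaoConvexFiniteType} to conclude $H^{s,p}$ boundedness for all $s\in\R$ and $1<p<\infty$.

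First I would recall from \cite{YaoConvexFiniteType} that $\Hc_1$ is realized as a solid integral $\Hc_1(\alpha)(z)=\int_\Omega K(z,\zeta)\wedge\alpha(\zeta)$, where $K$ is built from the Diederich-Fornaess support function via a Berndtsson-Andersson / Cauchy-Fantappie construction tailored to the convex finite type geometry. For $f\in C^\infty(\overline\Omega)$ I would apply Stokes' theorem to $\Hc_1\dbar f$ on $\Omega\setminus B(z,\eps)$ and let $\eps\to 0^+$: the inner sphere contributes $f(z)$ because $K$ encodes a fundamental solution of $\dbar_\zeta$ on the diagonal, the off-diagonal volume term $\dbar_\zeta K\wedge f$ vanishes by the holomorphic-in-$\zeta$ structure of $K$, and what remains is a pure boundary integral $\Pc f(z)=\int_{\partial\Omega}\widetilde K(z,\zeta)\,f(\zeta)$ with $\widetilde K$ of Cauchy-Fantappie type. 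An immediate consequence is that $\Pc f$ is holomorphic, which is also visible from $\dbar\Pc f=\dbar f-\dbar\Hc_1\dbar f=\Hc_2\dbar^2f=0$.

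The next step is to prove $\Pc:H^{s,p}(\Omega)\to H^{s,p}(\Omega)$ for all $s\ge 0$ by direct kernel estimates on $\widetilde K$. Since $\widetilde K$ shares the same non-isotropic singularity structure as $K$, governed by the McNeal polydiscs adapted to the finite type geometry, the analysis already carried out in \cite{YaoConvexFiniteType} for $\Hc_1$ applies with essentially the same inputs; in the Triebel-Lizorkin scale (which contains $H^{s,p}$) the required bounds follow from atomic / molecular decompositions paired with H\"older size estimates of $\widetilde K$ in the McNeal pseudo-metric. For $s<0$ I would extend by duality: the adjoint $\Pc^{*}$ has kernel $\overline{\widetilde K(\zeta,z)}$, still of Cauchy-Fantappie type, so $\Pc^{*}:H^{s,p'}(\Omega)\to H^{s,p'}(\Omega)$ is bounded for $s\ge 0$ by the same argument, and dualizing then gives the desired bound on negative indices. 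Complex interpolation via Remark~\ref{Rmk::RmkHsp}~\ref{Item::RmkHsp::Interpo} fills in non-integer $s$, and Remark~\ref{Rmk::RmkHsp}~\ref{Item::RmkHsp::Wkp=Hkp} converts $H^{k,p}$ boundedness into the $W^{k,p}$ statement of the theorem.

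The main obstacle is the kernel analysis underlying the boundedness of the boundary Cauchy-Fantappie operator across the full Triebel-Lizorkin scale: on a convex finite type domain $\widetilde K$ is singular with respect to the non-isotropic McNeal pseudo-metric whose widths depend on the directional type, and one needs quantitative size, cancellation, and H\"older bounds to feed the atomic / molecular machinery. These are precisely the technical estimates carried out in the main body and appendix of the arxiv version of \cite{YaoConvexFiniteType}, so the present proof chiefly amounts to verifying that the reduction from $\Hc_1$ to $\Pc=\id-\Hc_1\dbar$ preserves their applicability, after which the whole argument slots in without further surprises.
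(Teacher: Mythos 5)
The central step of your argument --- reducing $\Pc$ via Stokes' theorem to a boundary integral $\Pc f(z)=\int_{b\Omega}\widetilde K(z,\zeta)f(\zeta)$ --- cannot support the statement being proved. The theorem asserts boundedness on $H^{s,p}(\Omega)$ for \emph{all} $s\in\R$, in particular for $s\le 1/p$ and for arbitrarily negative $s$, where elements of $H^{s,p}(\Omega)$ have no boundary trace; a pure boundary integral against $f$ is not even defined on such inputs (this is essentially the same obstruction the paper points out for the Nijenhuis--Woolf operators $S^j$). Moreover, the operators $\Hc_q$ of \cite{YaoConvexFiniteType} are not the classical solid integrals over $\Omega$: as recalled in \eqref{Eqn::SkewBerg::DefHq} they are built from Rychkov's extension $\Ec$ and integrals over $\Uc$ and $\Uc\backslash\overline\Omega$, precisely so that no trace is ever taken. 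The identity the paper actually derives is $\Pc f=\int_{\Uc\backslash\overline\Omega}F(z,\cdot)\wedge[\dbar,\Ec]f$ with $F$ the Cauchy--Fantappi\`e form \eqref{Eqn::SkewBerg::CF}: an integral over the \emph{exterior collar} against the commutator $[\dbar,\Ec]f$, not a boundary integral against $f$. Your subsequent plan inherits this problem: the claim that ``the analysis already carried out for $\Hc_1$ applies with essentially the same inputs'' hides the real difficulty, namely that $\Pc$ has zero net smoothing (unlike $\Hc_1$, which gains $1/m$ derivatives), so there is no slack in the exponents. The paper closes this gap by splitting both the kernel and the commutator into tangential and normal parts and pairing them crosswise --- $F^\top$ (which gains a full derivative, Proposition~\ref{Prop::WeiEst}) against $[\dbar,\Ec]^\bot$ (which loses a full derivative), and $F^\bot$ (which gains only $1/m$) against $[\dbar,\Ec]^\top$ (which loses only $1/m$, Proposition~\ref{Prop::Technical}) --- and by inserting the anti-derivative operators $\Sc^{k,\alpha}$ with integration by parts to reach arbitrarily negative $s$. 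None of these mechanisms appears in your outline.

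The duality step for $s<0$ is also flawed as stated. The dual of $H^{s,p'}(\Omega)$ is not $H^{-s,p}(\Omega)$ but $\widetilde H^{-s,p}(\overline\Omega)$ (distributions supported in $\overline\Omega$), so boundedness of $\Pc^{*}$ on $H^{s,p'}(\Omega)$ for $s\ge0$ does not dualize to boundedness of $\Pc$ on $H^{-s,p}(\Omega)$; one would need $\Pc^{*}$ bounded on the $\widetilde H$-scale, a different statement. In addition, the adjoint of a boundary-to-interior Cauchy--Fantappi\`e operator is an interior-to-boundary operator, and the Leray kernel is holomorphic in $z$ but has no comparable structure in $\zeta$, so $\overline{\widetilde K(\zeta,z)}$ is not ``still of Cauchy--Fantappi\`e type'' and its size estimates do not come for free. (Your observation that $\dbar\Pc f=\Hc_2\dbar^2f=0$, so $\Pc f$ is holomorphic, is correct but is not used in, and does not substitute for, the quantitative argument.)
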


Here for a convex domain we can use affine line type \cite[Definition~3.1]{YaoConvexFiniteType} to define the type condition. See e.g. \cite{McNealFiniteType,BoasStraubeFiniteType} for more discussions.

We briefly review the construction. Let $\varrho:\C^n\to\R$ be a defining function of $\Omega$, which is a smooth function such that $\nabla\varrho(z)\neq0$ for all $\zeta\in b\Omega$ and $\Omega=\{\zeta\in\C^n:\varrho(\zeta)\neq0\}$. We can assume that there is a $T_1>0$ such that for all $|t|<T_1$ the sublevel set $\Omega_t:=\{\varrho<t\}$ are all scaled copies of $\Omega$, which in particular have the same finite type as   $\Omega$. 

Denote $U_1=\{\zeta:-T_1<\varrho(\zeta)<T_1\}$. For each $\zeta\in U_1$, we have orthogonal decomposition of the $(0,1)$ cotangent space $\C^n=T^{*0,1}_\zeta\C^n=(\Span_{\mathbb C} \dbar\varrho(\zeta))\oplus T^{*0,1}_\zeta(b\Omega_{\varrho(\zeta)})$. This leads to an orthogonal decomposition $f=f^\top+f^\bot$ for $(0,q)$ forms $f(\zeta)=\sum_If_I(z)\dbar \zeta^I$ defined in $U_1$:
\begin{itemize}
    \item $f^\bot$ is in the ideal generated by $\dbar\varrho$, i.e. $\iota_{Z}f^\bot=0$ for every $(0,1)$-vector fields $Z=\sum_{j=1}^nZ_j\Coorvec{\bar \zeta_j}$ such that $Z\varrho=0$.
    \item $f^\top$ is a section of $\coprod_\zeta\wedge^q T^{*0,1}_\zeta(b\Omega_{\varrho(\zeta)})$, i.e. $\iota_{\Coorvec{\bar\varrho}}f^\top=0$, where $\Coorvec{\bar\varrho}=|\dbar\varrho|^{-2}\sum_{j=1}^n\frac{\partial\varrho}{\partial\zeta_j}\Coorvec{\bar \zeta_j}$.
\end{itemize}
See \cite[Definition~2.6 and Remark~2.8]{YaoConvexFiniteType} for details. For a bidegree form $K(z,\zeta)$ in variables $z$ and $\zeta$, we use $K^\top(z,\zeta)$ and $K^\bot(z,\zeta)$ for the projections with respect to $\zeta$-variable but not to $z$-variable.

For $\zeta\in U_1$ we also define the so-called \textit{$\eps$-minimal ellipsoid} (associated to $\varrho$):
\begin{equation}\label{Eqn::SkewBerg::DefPEps}
    P_\eps(\zeta)=\Big\{\zeta+\sum_{j=1}^na_jv_j: a_1,\dots,a_n\in\C,\ \sum_{j=1}^n\frac{|a_j|^2}{\tau_j(\zeta,\eps)^2}<1\Big\},
\end{equation} where $(v_1,\dots,v_n)$ is a unitary basis called \textit{$\eps$-minimal basis} at $\zeta$ and $\tau_1(\zeta,\eps)\le\dots\le\tau_n(\zeta,\eps)$ are the side lengths. See \cite[Definition~3.2]{YaoConvexFiniteType} and \cite[Definition~2.6]{HeferMultitype}. Roughly speaking $\tau_j(\zeta,\eps)$ is the minimum number such that there is a unit vector $v_j$ satisfying $v_j\bot\Span_\C(v_1,\dots,v_{j-1})$ and $\varrho(\zeta+\tau_j(\zeta,\eps)\cdot v_j)=\varrho(\zeta)+\eps$.
This was first constructed by Yu in \cite{YuMultitype}. 

Recall from \cite[Lemma~3.3 and Remark~3.4]{YaoConvexFiniteType} that the following estimates hold: there is a $C_0>1$ and $\eps_0>0$ such that
\begin{enumerate}[label=(\arabic*)]\setcounter{enumi}{\value{equation}}
    \item\label{Eqn::FlipP} For every $0<\eps<\eps_0$ and $P_\eps(\zeta)\subset U_1$, if $z\in P_\eps(\zeta)$ then $\zeta\in P_{C_0\eps}(z)$;
    \item\label{Eqn::TauOrder} $C_0^{-1}\eps\le\tau_1(\zeta,\eps)\le C_0\eps$ and $\tau_n(\zeta,\eps)\le C_0\eps^{1/m}$, where $m$ is the type of $\Omega$.\setcounter{equation}{\value{enumi}}
\end{enumerate}
See also \cite[Section~2]{HeferMultitype} for more details.
We shall need the following estimates:
\begin{prop}[{\cite[Lemma~3.9]{YaoConvexFiniteType}}]\label{Prop::BasisEst}
    Let $\Omega  $, $\varrho$, $P_\eps(\zeta)$, $\tau_j(\zeta, \eps)$ and $\eps_0$ be defined as above. There is a neighborhood $\Uc$ of $\overline\Omega$ and a smooth $(1,0)$ form $\widehat Q(z,\zeta)=\sum_{j=1}^nQ_j(z,\zeta)d\zeta_j$ defined for $z\in\Omega$ and $\zeta\in\Uc\backslash\Omega$, such that:
\begin{enumerate}[(i)]
    \item $\widehat Q$ is a Leray form, i.e. $\widehat Q$ is holomorphic in $z$, and $|\widehat Q(z,\zeta)|\neq0$ for all $z\in\Omega$ and $\zeta\in \Uc\backslash\Omega$.
    \item Denote $\widehat S(z,\zeta):=\sum_{l=1}^n\widehat Q_l(z,\zeta)(\zeta_l-z_l)$.
    For every $k\ge0$ there is a $C_k>0$ such that for every $0\le j\le n-1$, $0<\eps\le\eps_0$, $\zeta\in \Uc\backslash\overline\Omega$ and $z\in \Omega\cap P_\eps(\zeta)\backslash P_{\eps/2}(\zeta)$,
\begin{equation}\label{Eqn::FiniteCX::BasisEst}
    \bigg|D^k_{z,\zeta}\Big(\frac{\widehat Q\wedge(\dbar \widehat Q)^j}{\widehat S^{j+1}}\Big)^\top(z,\zeta)\bigg|\le \frac{C_k\eps^{-1-k}}{\prod_{l=2}^{j+1}\tau_l(\zeta,\eps)^2};\quad
    \bigg|D^k_{z,\zeta}\Big(\frac{\widehat Q\wedge(\dbar \widehat Q)^j}{\widehat S^{j+1}}\Big)^\bot(z,\zeta)\bigg|\le \frac{C_k\eps^{-2-k}\tau_{j+1}(\zeta,\eps)}{\prod_{l=2}^{j+1}\tau_l(\zeta,\eps)^2}.
\end{equation}
\end{enumerate}
    Here $D^k=\{\partial^\alpha_z\partial^\beta_\zeta\partial^\gamma_{\overline\zeta}\}_{|\alpha+\beta+\gamma|\le k}$ is the collection of differential operators acting on the components of the forms.
\end{prop}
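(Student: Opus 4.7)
The plan is to take the natural Leray form coming from convex geometry, namely $\widehat Q(z,\zeta):=\partial_\zeta\varrho(\zeta)=\sum_{j=1}^n\frac{\partial\varrho}{\partial\zeta_j}(\zeta)\,d\zeta_j$, which, being independent of $z$, is automatically holomorphic in $z$. Since $|\nabla\varrho|$ is bounded below on a tubular neighborhood $\Uc$ of $b\Omega$, we have $|\widehat Q|\neq 0$ there. Shrinking $\Uc$ so that $\varrho$ is convex on it, a Taylor expansion along the real segment from $\zeta$ to $z$ gives $2\re\widehat S(z,\zeta)\ge 2(\varrho(\zeta)-\varrho(z))-C|\zeta-z|^3$; for $z\in\Omega$ and $\zeta\in\Uc\setminus\overline\Omega$ the first term is positive, and after refining $\Uc$ the cubic error is dominated by the linear one, establishing the Leray property.

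The estimates \eqref{Eqn::FiniteCX::BasisEst} I would prove by expanding in the $\eps$-minimal basis at $\zeta$. Let $(v_1,\dots,v_n)$ be that basis, write $D_l=v_l\cdot\partial_\zeta$ and $\overline{D_l}$ for its conjugate, and set $\tau_l=\tau_l(\zeta,\eps)$. The fundamental anisotropic bounds of Yu \cite{YuMultitype} and Hefer \cite{HeferMultitype} state that $|D^\alpha\overline{D}^\beta\varrho(\zeta)|\lesssim \eps\prod_l\tau_l^{-\alpha_l-\beta_l}$ for any nonzero multi-index. Writing $z-\zeta=\sum_l a_l v_l$ and Taylor expanding
$$\widehat S(z,\zeta)=\sum_l D_l\varrho(\zeta)\,a_l+\text{(higher order in }a\text{)},$$
the bound $|a_l|\le\tau_l$ valid on $P_\eps(\zeta)$ by \eqref{Eqn::SkewBerg::DefPEps} yields $|\widehat S(z,\zeta)|\lesssim\eps$ on $P_\eps(\zeta)$. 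The crucial matching lower bound is
$$|\widehat S(z,\zeta)|\gtrsim\eps\qquad\text{for }z\in P_\eps(\zeta)\setminus P_{\eps/2}(\zeta),$$
obtained by combining $\re\widehat S(z,\zeta)\gtrsim\varrho(\zeta)-\varrho(z)$ from convexity with the geometric observation that on $P_\eps(\zeta)\setminus P_{\eps/2}(\zeta)$ there is some $l$ with $|a_l|\gtrsim\tau_l$; the definition of $\tau_l$ then forces $\varrho(\zeta)-\varrho(z)\gtrsim\eps$, and the cubic error is absorbed using $\tau_n\lesssim\eps^{1/m}$ from \ref{Eqn::TauOrder}, which is where the finite-type hypothesis enters.

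With these two bounds on $\widehat S$ in hand, I differentiate the quotient $\widehat Q\wedge(\dbar\widehat Q)^j/\widehat S^{j+1}$. Each application of a derivative to $\widehat S^{-(j+1)}$ contributes $\lesssim \eps^{-1}|D\widehat S|$, and since $|D_l\widehat S|,|\overline{D_l}\widehat S|\lesssim\eps/\tau_l$ a $v_l$-derivative costs $\tau_l^{-1}$. The coefficients of $\widehat Q\wedge(\dbar\widehat Q)^j$ involve products of first derivatives $D_l\varrho$ and second derivatives $D_l\overline{D_k}\varrho$, of sizes $\lesssim\eps/\tau_l$ and $\lesssim\eps/(\tau_l\tau_k)$; combining everything and accounting for the $a_l\bar a_l$ pairings of indices appearing in $\widehat Q\wedge(\dbar\widehat Q)^j$, the dominant anisotropic denominator is $\prod_{l=2}^{j+1}\tau_l^2$. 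The $(\cdot)^\top$ projection kills the $\dbar\varrho$-components of the form, producing the cleaner $\eps^{-1-k}$ rate. The $(\cdot)^\bot$ projection contracts against $\dbar\varrho/|\dbar\varrho|^2$, and picking up a $\dbar\varrho$ factor in the $v_{j+1}$-slot costs an additional $\tau_{j+1}/\eps$ rather than being absent altogether, producing the advertised $\eps^{-2-k}\tau_{j+1}$ in the $\bot$ bound.

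The main obstacle is the lower bound $|\widehat S|\gtrsim\eps$ on the anisotropic annulus, since this is a uniform global estimate rather than a local Taylor statement, and it is precisely where convexity and finite type are combined nontrivially through the relationship between $\tau_l$, $|a_l|$, and the level-set increment of $\varrho$. The secondary delicate point is the bookkeeping for the $\top/\bot$ decomposition: one must verify that the tangential projection onto $\wedge^qT^{*0,1}(b\Omega_{\varrho(\zeta)})$ removes exactly the $\dbar\varrho$-components that would otherwise produce the worse $\eps^{-2-k}$ rate, while the normal component retains exactly one such contraction and hence the extra $\tau_{j+1}$ factor.
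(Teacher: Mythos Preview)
Your choice of Leray form $\widehat Q=\partial_\zeta\varrho$ is not the one the paper uses, and it does not yield the crucial lower bound $|\widehat S|\gtrsim\eps$ on the anisotropic annulus. The paper does not prove this proposition from scratch; it quotes \cite[Lemma~3.9]{YaoConvexFiniteType}, and immediately afterward records that ``This Leray map was constructed by Diederich and Forn\ae ss \cite{DFSupport}.'' The Diederich--Forn\ae ss support function is a genuinely nonlinear construction (built from suitable powers and averages of $\partial\varrho\cdot(\zeta-z)$) engineered precisely so that $|\widehat S(z,\zeta)|$ dominates the McNeal pseudodistance, which is what gives $|\widehat S|\gtrsim\eps$ on $P_\eps(\zeta)\setminus P_{\eps/2}(\zeta)$. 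Your linear form $\partial\varrho(\zeta)\cdot(\zeta-z)$ only satisfies $2\re\widehat S\ge\varrho(\zeta)-\varrho(z)$ from convexity, and that quantity need not be comparable to $\eps$.

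The specific step that fails is ``$|a_l|\gtrsim\tau_l$ forces $\varrho(\zeta)-\varrho(z)\gtrsim\eps$.'' The definition of $\tau_l$ says that \emph{some} point on the circle $\{\zeta+av_l:|a|=\tau_l\}$ lies on the level set $\varrho=\varrho(\zeta)+\eps$; it says nothing about the other points on that circle. Concretely, take $\varrho(z_1,z_2)=\re z_1+|z_2|^4+c(\re z_2)^2$ near the origin (convex, type $4$), and $\zeta=(\delta,0)$ with $0<\delta\ll\eps$. Then $v_1=(1,0)$, $v_2=(0,1)$, $\tau_1\approx\eps$, $\tau_2\approx\sqrt{\eps/c}$, and $D_2\varrho(\zeta)=0$. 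Choosing $a_2$ purely imaginary with $|a_2|\approx\tau_2$ contributes only $|a_2|^4\approx\eps^2/c^2\ll\eps$ to $\varrho$, so one can place $z\in\Omega\cap\big(P_\eps(\zeta)\setminus P_{\eps/2}(\zeta)\big)$ with $|a_1|\approx\delta$. For the naive form this gives $\widehat S=\tfrac12 a_1$, hence $|\widehat S|\approx\delta\ll\eps$, and every downstream estimate in \eqref{Eqn::FiniteCX::BasisEst} collapses. (Your Taylor remark ``$2\re\widehat S\ge 2(\varrho(\zeta)-\varrho(z))-C|\zeta-z|^3$'' is also off: convexity gives $2\re\widehat S\ge\varrho(\zeta)-\varrho(z)$ exactly, with no cubic error; the issue is not the error term but that $\varrho(\zeta)-\varrho(z)$ itself can be much smaller than $\eps$.) The correct route is to take $\widehat Q$ and $\widehat S$ from the Diederich--Forn\ae ss construction, after which the derivative bookkeeping you sketch for \eqref{Eqn::FiniteCX::BasisEst} is essentially the argument carried out in \cite{YaoConvexFiniteType}.
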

Here in the reference \cite[Lemma~3.9]{YaoConvexFiniteType} the second term in \eqref{Eqn::FiniteCX::BasisEst} is stated for $\big(\frac{\widehat Q\wedge(\dbar \widehat Q)^j}{\widehat S^{j+1}}\big)(z,\zeta)$. Nevertheless using $\big(\frac{\widehat Q\wedge(\dbar \widehat Q)^j}{\widehat S^{j+1}}\big)^\bot=\big(\frac{\widehat Q\wedge(\dbar \widehat Q)^j}{\widehat S^{j+1}}\big)-\big(\frac{\widehat Q\wedge(\dbar \widehat Q)^j}{\widehat S^{j+1}}\big)^\top$ and \ref{Eqn::TauOrder} we get the same estimate (with some larger $C_k$) for $\big(\frac{\widehat Q\wedge(\dbar \widehat Q)^j}{\widehat S^{j+1}}\big)^\bot(z,\zeta) $.

This Leray map was constructed by Diederich and Forn\ae ss \cite{DFSupport}. Note that in the original construction \cite{DFSupport} the support function $S(z,\zeta)$ may have zeros when $|z-\zeta|$ is large. In \cite[Lemma~2.2]{YaoConvexFiniteType} we took a standard modification to avoid the issue.

Now we can recall the homotopy operators $(\Hc_q)_{q=1}^n$ in \cite{YaoConvexFiniteType}, which takes the following form:
\begin{equation}\label{Eqn::SkewBerg::DefHq}
    \Hc_qf(z)=\int_\Uc B_{q-1}(z,\cdot)\wedge \Ec f+\int_{\Uc\backslash\overline\Omega}K_{q-1}(z,\cdot)\wedge[\dbar,\Ec]f,\qquad f\in\Ss'(\Omega;\wedge^{0,q}),\qquad 1\le q\le n.
\end{equation}
Here $\Uc$ is the bounded neighborhood of $\overline\Omega$ determined in Proposition~\ref{Prop::BasisEst}. $\Ec:\Ss'(\Omega)\to \Es'(\Uc)$ is Rychkov's extension operator \cite{RychkovExtension}, acting on the components of the forms, see \cite[(4.6) and (4.14)]{YaoSPsiCXC2} for the precise formula.
\begin{equation*}
    B(z,\zeta):=\frac{b\wedge(\dbar b)^{n-1}}{(2\pi i)^n|\zeta-z|^{2n}}=\sum_{q=0}^{n-1}B_q,\quad K(z,\zeta)=\frac{ b\wedge \widehat Q}{(2\pi i)^n}\wedge\sum_{k=1}^{n-1}\frac{(\dbar b)^{n-1-k}\wedge(\dbar\widehat Q)^{k-1}}{|z-\zeta|^{2(n-k)}(\widehat Q\cdot(\zeta-z))^k}=\sum_{q=0}^{n-2}K_q, 
\end{equation*}
where  $b=\sum_{j=1}^n(\bar\zeta_j-\bar z_j)d\zeta^j$. $B$ is the \textit{Bochner-Martinelli form}, with  $B_q$  the component of degree $(0,q)$   in $z$ and   $(n,n-q-1)$  in $\zeta$.
$K$ is the \textit{Leray-Koppelman form}  associated to  $\widehat Q(z,\zeta)$ in Proposition~\ref{Prop::BasisEst}, where $K_q$ is the component of degree $(0,q)$ in $z$ and   $(n,n-q-2)$   in $\zeta$.

\medskip

Denote by 
\begin{equation}\label{Eqn::SkewBerg::CF}
    F(z,\zeta):=B(z,\zeta)-\dbar_{z,\zeta}K(z,\zeta),\quad z\in\Omega,\quad \zeta\in\Uc\backslash\overline\Omega, 
\end{equation}
   the \textit{Cauchy-Fantappi\`e form}. Recall from \cite[Lemma~11.1.1]{ChenShawBook} we have
\begin{equation*}
    F(z,\zeta)=\frac{\widehat Q\wedge(\dbar\widehat Q)^{n-1}}{(2\pi i)^n\widehat S(z,\zeta)^n}=\frac{\widehat Q(z,\zeta)\wedge (\dbar\widehat Q(z,\zeta))^{n-1}}{(2\pi i)^n(\widehat Q(z,\zeta)\cdot(\zeta-z))^n},\quad z\in\Omega,\quad\zeta\in\Uc\backslash\overline\Omega.
\end{equation*}
Note that $F$ is a bi-degree $(n,n-1)$  form, with degree $(0,0)$ in $z$ and $(n,n-1)$ in $\zeta$.   We write the decomposition $F=F^\top+F^\bot$ in $\zeta$-variable as defined from above (see \cite[Convention~2.7]{YaoConvexFiniteType}).

\begin{prop}\label{Prop::WeiEst}
    Assume $\Omega$ is convex and has finite type $m$. Let $\delta(w):=\dist(w,b\Omega)$. Then for any $s\in\R$ and $k\in\Z_+$ such that $0<s<k-1$, there is a constant  $C=C(\Omega,\Uc,\widehat Q,k,s)>0$ such that
\begin{align}
    \label{Eqn::WeiEst::Topz}
    \int_{\Uc\backslash\overline\Omega}\delta(\zeta)^s |D^k_{z,\zeta}(F^\top)(z,\zeta)|d\Vol(\zeta)&\le C\delta(z)^{s+1-k},&&\forall z\in \Omega;
    \\
    \label{Eqn::WeiEst::Topzeta}
    \int_{\Omega}\delta(z)^s |D^k_{z,\zeta}(F^\top)(z,\zeta)|d\Vol(z)&\le C\delta(\zeta)^{s+1-k},&&\forall \zeta\in \Uc\backslash\overline\Omega;
    \\
    \label{Eqn::WeiEst::Botz}
    \int_{\Uc\backslash\overline\Omega}\delta(\zeta)^s|D^k_{z,\zeta}(F^\bot)(z,\zeta)|d\Vol(\zeta)&\le C\delta(z)^{s+\frac1m-k},&&\forall z\in \Omega;
    \\
    \label{Eqn::WeiEst::Botzeta}
    \int_{\Omega}\delta(z)^s |D^k_{z,\zeta}(F^\bot)(z,\zeta)|d\Vol(z)&\le C\delta(\zeta)^{s+\frac1m-k},&&\forall\zeta\in \Uc\backslash\overline\Omega.
\end{align}
As a result if we define for every $\alpha\in\N^{2n}_{\zeta,\bar\zeta}$
$$\Fc^{\alpha,\top}g(z):=\int_{\Uc\backslash\overline\Omega}D_\zeta^\alpha (F^\top)(z,\cdot)\wedge g,\quad \Fc^{\alpha,\bot}g(z):=\int_{\Uc\backslash\overline\Omega}D_\zeta^\alpha (F^\bot)(z,\cdot)\wedge g,\qquad g\in L^1(\Uc\backslash\overline\Omega; \wedge^{0,1}),$$
then in terms of Definition~\ref{Defn::Hsp}, for every $s>0$ and $1<p<\infty$, 
\begin{equation}\label{Eqn::FOpBdd}
    \Fc^{\alpha,\top}:\widetilde H^{s,p}(\overline\Uc\backslash\Omega;\wedge^{0,1})\to H^{s+1-|\alpha|,p}(\Omega),\qquad \Fc^{\alpha,\bot}:\widetilde H^{s,p}(\overline\Uc\backslash\Omega;\wedge^{0,1})\to H^{s+\frac1m-|\alpha|,p}(\Omega)
\end{equation}
are bounded.
\end{prop}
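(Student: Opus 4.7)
The plan is in three stages: derive pointwise kernel bounds on $F^\top$ and $F^\bot$ from Proposition~\ref{Prop::BasisEst} at $j=n-1$; use a dyadic decomposition in $\eps$-minimal ellipsoids to obtain the weighted $L^1$-estimates \eqref{Eqn::WeiEst::Topz}--\eqref{Eqn::WeiEst::Botzeta}; and convert those into the Bessel-space boundedness \eqref{Eqn::FOpBdd} by Schur's test combined with a weighted Hardy--Sobolev characterization.

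For the pointwise step, specializing \eqref{Eqn::FiniteCX::BasisEst} at $j=n-1$ gives, on $\Omega\cap P_\eps(\zeta)\setminus P_{\eps/2}(\zeta)$,
\[|D^k_{z,\zeta}F^\top(z,\zeta)|\lesssim\frac{\eps^{-1-k}}{\prod_{l=2}^n\tau_l(\zeta,\eps)^2},\qquad|D^k_{z,\zeta}F^\bot(z,\zeta)|\lesssim\frac{\eps^{-2-k}\tau_n(\zeta,\eps)}{\prod_{l=2}^n\tau_l(\zeta,\eps)^2}.\]
Combined with $\Vol(P_\eps(\zeta)\setminus P_{\eps/2}(\zeta))\approx\prod_{l=1}^n\tau_l(\zeta,\eps)^2$ and $\tau_1\approx\eps$, $\tau_n\lesssim\eps^{1/m}$ from \ref{Eqn::TauOrder}, each annulus contributes $\lesssim\eps^{1-k}$ to the $F^\top$ integral and $\lesssim\eps^{1/m-k}$ to the $F^\bot$ integral. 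For \eqref{Eqn::WeiEst::Topz}, I would fix $z\in\Omega$ and decompose $\Uc\setminus\overline\Omega$ into the dyadic shells $\zeta\in P_{2^j\delta(z)}(z)\setminus P_{2^{j-1}\delta(z)}(z)$ for $j\ge0$ (scales $\eps\ll\delta(z)$ are absent because $P_\eps(z)\cap(\Uc\setminus\overline\Omega)=\emptyset$). Property \ref{Eqn::FlipP} lets the pointwise estimate apply on each shell (after a uniform $C_0$-shift in $\eps$), while $\delta(\zeta)\lesssim 2^j\delta(z)$ on the shell, so its contribution is $\lesssim(2^j\delta(z))^{s+1-k}$; the sum converges as a geometric series with ratio $2^{s+1-k}<1$ since $s<k-1$, giving $\delta(z)^{s+1-k}$. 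Estimate \eqref{Eqn::WeiEst::Topzeta} follows by running the same scheme with $\zeta$ fixed and $z$ varying, and \eqref{Eqn::WeiEst::Botz}--\eqref{Eqn::WeiEst::Botzeta} are identical with $1$ replaced by $1/m$ in every exponent.

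For \eqref{Eqn::FOpBdd}, my plan is to invoke standard weighted Hardy--Sobolev characterizations on bounded Lipschitz domains: a fractional Hardy embedding $\|\delta^{-s}g\|_{L^p(\Uc\setminus\overline\Omega)}\lesssim\|g\|_{\widetilde H^{s,p}(\overline{\Uc\setminus\Omega})}$ and a weighted norm equivalence $\|u\|_{H^{\sigma,p}(\Omega)}\approx\|u\|_{L^p}+\|\delta^{k-\sigma}D^k u\|_{L^p(\Omega)}$ for $0<\sigma<k\in\Z_+$ (cf.\ \cite[Section~1.11.6]{TriebelTheoryOfFunctionSpacesI}). Choosing $k>s+|\alpha|+1$ and $\sigma=s+1-|\alpha|$, the target $\Fc^{\alpha,\top}:\widetilde H^{s,p}\to H^{s+1-|\alpha|,p}$ reduces to
\[\bigl\|\delta(z)^{k-s-1+|\alpha|}\,D_z^k\,\Fc^{\alpha,\top}g(z)\bigr\|_{L^p(\Omega)}\lesssim\|\delta(\zeta)^{-s}g(\zeta)\|_{L^p(\Uc\setminus\overline\Omega)}.\]
This is Schur's lemma applied to the kernel $|D_z^k D_\zeta^\alpha F^\top(z,\zeta)|$ with weights $\delta(z)^{k-s-1+|\alpha|}$ and $\delta(\zeta)^{s}$: the two required Schur integrals are exactly \eqref{Eqn::WeiEst::Topzeta} and \eqref{Eqn::WeiEst::Topz} applied at derivative order $k+|\alpha|$, whose admissibility condition $s<k+|\alpha|-1$ holds by choice of $k$, and the exponents in both pass cancel to a uniform constant. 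The bound for $\Fc^{\alpha,\bot}$ is analogous, using \eqref{Eqn::WeiEst::Botz}--\eqref{Eqn::WeiEst::Botzeta} and producing the $1/m$-gain in place of $1$. The main obstacle I anticipate is precisely this last step: pinpointing the correct Hardy--Sobolev characterization for fractional $s>0$ on $\widetilde H^{s,p}(\overline{\Uc\setminus\Omega})$ (whose boundary has two parts, $b\Omega$ and $b\Uc$, only one of which affects the kernel), and handling the weight near $b\Uc$---which is harmless because $F^\top$ stays smooth there, so a trivial truncation suffices.
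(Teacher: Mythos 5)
Your proposal is correct and follows essentially the same route as the paper: the per-annulus bounds from Proposition~\ref{Prop::BasisEst} at $j=n-1$ combined with the volume of $P_\eps\setminus P_{\eps/2}$, a dyadic decomposition in minimal ellipsoids summed geometrically using $s+1-k<0$, and property \ref{Eqn::FlipP} to swap the roles of $z$ and $\zeta$. The final step \eqref{Eqn::FOpBdd} is delegated in the paper to \cite[Corollary~A.28]{YaoSPsiCXC2}, whose content is exactly the Schur-test-plus-Hardy argument you sketch (your exponent bookkeeping checks out), so the only residual care needed is the fractional Hardy inequality at the exceptional values $s-1/p\in\Z_{\ge0}$, which is handled in the cited lemma and can otherwise be recovered by interpolation as in Remark~\ref{Rmk::RmkHsp}~\ref{Item::RmkHsp::Interpo}.
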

\begin{proof}
    Notice that for $0<\eps<\eps_0$ and $\zeta\in\Uc\backslash\Omega$ by \eqref{Eqn::SkewBerg::DefPEps} we have $|P_\eps(\zeta)\backslash P_{\eps/2}(\zeta)|\le |P_\eps(\zeta)| \le \prod_{l=1}^n\tau_l(\zeta,\eps)^2$. Similarly $|P_\eps(z)\backslash P_{\eps/2}(z)|\le \prod_{l=1}^n\tau_l(z,\eps)^2$ for $-\eps_0<\varrho(z)<0$  as well. 
    
According to Proposition \ref{Prop::BasisEst},  \eqref{Eqn::FiniteCX::BasisEst} holds for $0<\eps<\eps_0$, $\zeta\in\Uc\backslash\overline\Omega$ and $z\in \Omega\cap P_\eps(\zeta)\backslash P_{\frac\eps2}(\zeta)$. By \ref{Eqn::FlipP}, for a possibly larger $C_k>0$, one also has \eqref{Eqn::FiniteCX::BasisEst} holds for $0<\eps<\eps_0$, $-\eps_0<\varrho(z)<0$ and $\zeta\in P_\eps(z)\backslash( P_{\frac\eps2}(z)\cap\Omega)$. 
    
    Now take $j=n-1$ in \eqref{Eqn::FiniteCX::BasisEst}, for every $z\in\Omega$ and $\zeta\in\Uc\backslash\Omega$, we see that
    \begin{gather}
        \label{Eqn::WeiEst::BasisTop}
    \int_{\Omega \cap P_\eps(\zeta)\backslash P_{\frac\eps2}(\zeta)}|D^k(F^\top)(w,\zeta)|d\Vol(w)+\int_{P_\eps(z)\backslash (P_{\frac\eps2}(z)\cup \Omega)}|D^k(F^\top)(z,w)|d\Vol(w)\le C_k\eps^{1-k};
    \\
\label{Eqn::WeiEst::BasisBot}
    \int_{\Omega \cap P_\eps(\zeta)\backslash P_{\frac\eps2}(\zeta)}|D^k(F^\bot)(w,\zeta)|d\Vol(w)+\int_{P_\eps(z)\backslash(P_{\frac\eps2}(z)\cup \Omega)}|D^k(F^\bot)(z,w)|d\Vol(w)\le C_k\eps^{\frac1m-k}.
    \end{gather}

To prove  \eqref{Eqn::WeiEst::Topz}, we note that $0<s<k-1$, and $F$ is bounded  and smooth uniformly either for $z\in \Omega$ with $\delta(z)\ge \eps_0 $, or for $\zeta\in \Uc\backslash(P_{\eps_0}(z)\cup\Omega)$. 
 Thus it suffices to show $
     \int_{P_{\eps_0}(z)\backslash\overline\Omega}\delta(\zeta)^s|D^kF^\top(z,\zeta)|d\Vol_\zeta\lesssim \delta(z)^{s+1-k},\ \ \forall \delta(z)<\eps_0.
$
   Let $J\in\Z$ be the unique number such that $2^{-J}\eps_0\le\varrho(z)<2^{1-J}\eps_0$. Then  $P_{\eps_0}(z)\backslash\overline\Omega \subset \cup_{j=1}^J  P_{2^{1-j}\eps_0}(z)\backslash(P_{2^{-j}\eps_0}(z)\cup\Omega).$  Applying \eqref{Eqn::WeiEst::BasisTop} we get \eqref{Eqn::WeiEst::Topz}:
\begin{equation}\label{Eqn::WeiEst::PfTop}
    \begin{aligned}
    &\int_{P_{\eps_0}(z)\backslash\overline\Omega}\delta(\zeta)^s|D^kF^\top(z,\zeta)|d\Vol_\zeta\lesssim_{k}\sum_{j=1}^J\int_{P_{2^{1-j}\eps_0}(z)\backslash(P_{2^{-j}\eps_0}(z)\cup\Omega)}(2^{-j}\eps_0)^s|D^kF^\top(z,\zeta)|d\Vol_\zeta
    \\
    &\qquad\lesssim_k\sum_{j=1}^J(2^{-j}\eps_0)^s(2^{-j}\eps_0)^{1-k}\lesssim_{\eps_0}2^{-J(s+1-k)}\approx\delta(z)^{s+1-k}.
\end{aligned}
\end{equation}
By swapping $z$ and $\zeta$, the same argument yields \eqref{Eqn::WeiEst::Topzeta}.
Replacing \eqref{Eqn::WeiEst::BasisTop} by \eqref{Eqn::WeiEst::BasisBot}, the same computation as in \eqref{Eqn::WeiEst::PfTop} yields \eqref{Eqn::WeiEst::Botz} and \eqref{Eqn::WeiEst::Botzeta}.

The boundedness for $\Fc^{\alpha,\top}$ is a direct consequence from \cite[Corollary~A.28]{YaoSPsiCXC2} with \eqref{Eqn::WeiEst::Topz} and \eqref{Eqn::WeiEst::Topzeta}, similarly that of $\Fc^{\alpha,\bot}$ follows from \eqref{Eqn::WeiEst::Botz} and \eqref{Eqn::WeiEst::Botzeta}. The proof uses Hardy's distance inequality (see \cite[Proposition~5.3]{YaoConvexFiniteType}).
\end{proof}

\begin{prop}\label{Prop::Technical}
Let $\Omega\subset\C^n$ be a bounded smooth domain and $\Uc\supset\overline\Omega$ be a bounded smooth neighborhood. Let $\Ec$ be   Rychkov's extension operator in \cite[(4.14)]{YaoConvexFiniteType}. 
    \begin{enumerate}[(i)]
        \item\label{Item::Technical::Antidev} For every $k\ge1$ there are linear operators $(\Sc^{k,\alpha})_{|\alpha|\le k}:\Ss'(\C^n)\to\Ss'(\C^n)$ (here $\alpha\in\N^{2n}$) such that $\Sc^{k,\alpha}:\widetilde H^{s,p}(\overline{\Uc\backslash\Omega})\to\widetilde H^{s+k,p}(\overline{\Uc\backslash\Omega})$ is bounded and $g=\sum_{|\alpha|\le k}D^\alpha\Sc^{k,\alpha}g$ for all $\supp g\subset\Uc\backslash\Omega$.
        \item\label{Item::Technical::TangComm} The map $[f\mapsto([\dbar,\Ec]f)^\top]:  H^{s,p}(\Omega)\to \widetilde H^{s-\eps,p}(\overline{\Uc\backslash\Omega}) $ is bounded  for all $s\in\R$, $\eps>0$ and $1<p<\infty$.
    \end{enumerate}
\end{prop}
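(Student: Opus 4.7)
\emph{Proof plan for Proposition~\ref{Prop::Technical}.}

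For Part~\ref{Item::Technical::Antidev}, the identity $g=\sum_{|\alpha|\le k}D^\alpha\Sc^{k,\alpha}g$ must be realized by operators that simultaneously smooth $g$ by $k$ derivatives \emph{and} preserve support in $\overline{\Uc\setminus\Omega}$---two requirements in tension, since the standard spectral antidifferentiation $(I-\Delta)^{-k/2}$ spreads the support. The plan is to localize near $b\Omega$ via a finite partition of unity subordinate to boundary charts in which $\Omega$ is flattened to $\{x_n<0\}$, so that $g$ becomes a distribution supported in the closed upper half-space $\{x_n\ge 0\}$. On this half-space model, I would construct each $\Sc^{k,\alpha}$ as a Fourier multiplier $g\mapsto\check m_\alpha\ast g$ whose symbol $m_\alpha(\xi)$ satisfies: (a) $\sum_{|\alpha|\le k}(i\xi)^\alpha m_\alpha(\xi)\equiv 1$, so the operators recombine to the identity; (b) each $m_\alpha$ is an H\"ormander--Mikhlin symbol of order $\le -k$, so that $\Sc^{k,\alpha}:H^{s,p}(\R^{2n})\to H^{s+k,p}(\R^{2n})$; and (c) each $m_\alpha(\xi',\cdot)$ extends holomorphically in $\xi_n$ to the appropriate half-plane, so that by Paley--Wiener the kernel $\check m_\alpha$ is supported in $\{x_n\ge 0\}$ and convolution preserves $\supp g\subset\{x_n\ge 0\}$.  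A concrete realization uses the building block $(\sigma(\xi')+i\xi_n)^{-1}$ with $\sigma(\xi'):=(1+|\xi'|^2)^{1/2}$: its only $\xi_n$-pole sits in the upper half-plane, $|\sigma+i\xi_n|^2=1+|\xi|^2$ (placing its $k$-th power in the order $-k$ class), and the binomial expansion
\[
1=\frac{(\sigma+i\xi_n)^k}{(\sigma+i\xi_n)^k}=\sum_{j=0}^{k}\binom{k}{j}\frac{\sigma^{k-j}\,(i\xi_n)^j}{(\sigma+i\xi_n)^k}
\]
supplies (a) with the nonzero symbols corresponding to $\alpha=(0,\dots,0,j)$, $0\le j\le k$. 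Sobolev boundedness follows from H\"ormander--Mikhlin, and the global operator is assembled by patching via partition of unity; the resulting identity holds up to a smoothing error that is absorbed by a Neumann series thanks to (b).

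For Part~\ref{Item::Technical::TangComm}, I first note that $[\dbar,\Ec]f=\dbar(\Ec f)-\Ec(\dbar f)$ vanishes on $\Omega$ because $\Ec f|_\Omega=f$, so it automatically sits in $\widetilde H^{s-1,p}(\overline{\Uc\setminus\Omega};\wedge^{0,1})$ a priori. The task is to upgrade $s-1$ to $s-\eps$ upon tangential projection. The plan is to exploit the structure of Rychkov's extension from \cite{RychkovExtension}: $\Ec$ is built as convolution with kernels dilated only in the direction normal to $b\Omega$, so that in local boundary coordinates the tangential $\bar\partial$-derivatives commute with $\Ec$ up to an operator of order zero, while only the normal $\bar\partial$-derivative produces a genuine, order-one commutator. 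Since the tangential projection $(\cdot)^\top$ precisely kills the component along $\dbar\varrho$---the normal direction---the ``bad'' normal commutator is erased, and $([\dbar,\Ec]f)^\top$ is governed by the tangential commutator alone, which gains a full derivative. This yields the desired bound at integer $s$; the complex interpolation of Remark~\ref{Rmk::RmkHsp}~\ref{Item::RmkHsp::Interpo} then propagates the estimate to all $s\in\R$ with an arbitrarily small loss $\eps$.

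The hard part in Part~\ref{Item::Technical::Antidev} will be the support-preservation constraint, which rules out the naive $(I-\Delta)^{-k/2}$ approach and forces the Paley--Wiener construction above; verifying the H\"ormander--Mikhlin derivative estimates for the anisotropic-looking symbols $\sigma^{k-j}(\sigma+i\xi_n)^{-k}$, and controlling the cross-chart remainders by a Neumann series, require care. The hard part in Part~\ref{Item::Technical::TangComm} will be making rigorous, in local boundary coordinates, the assertion that the normal-derivative contribution is exactly annihilated by the tangential projection, which depends on a careful reading of the explicit form of Rychkov's extension operator and the geometry of $\dbar\varrho$ near $b\Omega$.
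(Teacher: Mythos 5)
First, a point of comparison: the paper does not prove this proposition at all --- it cites \cite[Proposition~1.7]{ShiYaoExt} for \ref{Item::Technical::Antidev} and \cite[Corollary~5.5~(iii)]{YaoConvexFiniteType} for \ref{Item::Technical::TangComm}. So your proposal is measured against those constructions, and in both parts it has genuine gaps.

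For \ref{Item::Technical::Antidev}, the fatal issue is that your Paley--Wiener multipliers do not have compactly supported kernels: $\check m_\alpha$ is supported in the closed half-space $\{x_n\ge 0\}$ but is spread over all of it. Once you localize with a partition of unity and flatten in a chart, $\check m_\alpha\ast(\chi_\nu g\circ\Phi_\nu^{-1})$ has support reaching far beyond the region where the flattening $\Phi_\nu$ has any meaning, so the pulled-back output is neither well defined globally nor supported in $\overline{\Uc\backslash\Omega}$; in particular nothing prevents leakage across $b\Omega$ away from the chart, and you never address the outer boundary $b\Uc$ at all, even though $\widetilde H^{s+k,p}(\overline{\Uc\backslash\Omega})$ forbids support past it. The proposed fix by a ``Neumann series'' does not work either: the cross-chart errors gain one derivative but are not small in operator norm on a fixed $\widetilde H^{s,p}$, so the series has no reason to converge (a finite iteration absorbing errors once they have gained $k$ derivatives would be the right shape of argument, but only if every error operator already preserves the support, which is exactly what fails). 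The construction in \cite{ShiYaoExt} avoids all of this by replacing the Fourier-side half-plane analyticity with a real-variable dyadic resolution $g=\sum_j\phi_j\ast\psi_j\ast g$ whose kernels are \emph{compactly} supported in a fixed truncated cone adapted to the Lipschitz geometry and have vanishing moments, so that one may write $\psi_j=2^{-jk}\sum_{|\alpha|=k}D^\alpha\eta_{\alpha,j}$; compact cone support is precisely what makes both the localization and the two-sided support constraint for the annular region work.

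For \ref{Item::Technical::TangComm}, your heuristic points in the right direction but the two load-bearing claims are not established. First, Rychkov's operator is \emph{not} a mollification only in the normal direction: it is a sum $\sum_j\psi_j\ast\Lambda_j(\phi_j\ast\cdot)$ of isotropically dilated convolutions whose kernels are supported in a cone, so the assertion that tangential derivatives ``commute with $\Ec$ up to an operator of order zero'' is exactly the nontrivial content of the cited \cite[Corollary~5.5~(iii)]{YaoConvexFiniteType} (proved there through moment conditions and weighted estimates in $\delta(\zeta)$), not something that can be read off from the structure of $\Ec$ as you describe it. Second, the projection $(\cdot)^\top$ acts on the form indices, not on which derivative produced the coefficient: writing $[\dbar,\Ec]f=\sum_j[\partial_{\bar\zeta_j},\Ec]f\,d\bar\zeta^j$, one has $([\dbar,\Ec]f)^\top=\sum_j[\partial_{\bar\zeta_j},\Ec]f\,(d\bar\zeta^j)^\top$, and identifying this with the tangential-vector-field commutators $\sum_j[T_j,\Ec]f\,d\bar\zeta^j$ requires commuting $\Ec$ past the smooth coefficients $\partial\varrho/\partial\zeta_j\cdot|\partial\varrho|^{-2}$; these commutators $[\Ec,a]$ must themselves be shown to be smoothing, a step absent from your plan. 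Finally, if the integer-order estimate were lossless as you suggest, interpolation would give $s-0$ rather than $s-\eps$; the $\eps$ is an endpoint loss intrinsic to the dyadic summation, not an artifact of interpolating between integers.
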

See \cite[Proposition~1.7]{ShiYaoExt} for \ref{Item::Technical::Antidev} and \cite[Corollary~5.5~(iii)]{YaoConvexFiniteType} for \ref{Item::Technical::TangComm}.

\begin{proof}[Proof of Theorem~\ref{Thm::PforCXFinite}]
    Since $B-F=\dbar_{z,\zeta}K$ by \eqref{Eqn::SkewBerg::CF}, by separating the degrees we see that $F=B_0-\dbar_\zeta K_0$. For the same extension operator $\Ec$ in \eqref{Eqn::SkewBerg::DefHq} we have (see e.g. \cite[Proposition~2.1]{GongHolderSPsiCXC2}), for $f\in\Ss'(\Omega)$,
    \begin{align*}
        \Pc f(z)=&f(z)-\Hc_1\dbar f(z)=\Ec f(z)-\int_\Uc B_0(z,\cdot)\wedge\Ec\dbar f-\int_{\Uc\backslash\overline\Omega} K_0(z,\cdot)\wedge[\dbar,\Ec]\dbar f
        \\
        =&\int_\Uc \dbar_\zeta B_0(z,\cdot)\wedge \Ec f-\int_\Uc B_0(z,\cdot)\wedge\dbar\Ec f+\int_{\Uc\backslash\overline\Omega} B_0(z,\cdot)\wedge[\dbar,\Ec] f+\int_{\Uc\backslash\overline\Omega} K_0(z,\cdot)\wedge\dbar[\dbar,\Ec] f
        \\
        =&\int_{\Uc\backslash\overline\Omega} B_0(z,\cdot)\wedge[\dbar,\Ec]f-\int_{\Uc\backslash\overline\Omega}\dbar_\zeta K_0(z,\cdot)\wedge[\dbar,\Ec]f=\int_{\Uc\backslash\overline\Omega}F(z,\cdot)\wedge[\dbar,\Ec]f.
    \end{align*}
    Fix $s\in\R$ and $1<p<\infty$. It suffices  to show $\Pc:H^{s,p}(\Omega)\to H^{s,p}(\Omega)$ is bounded.
    
    Take $k\in\Z_+$ such that $k>1-s$. By the $\top,\bot$ decomposition, Proposition~\ref{Prop::Technical}~\ref{Item::Technical::Antidev} and integration by parts we have
    \begin{align*}
        \Pc f(z)=&\int_{\Uc\backslash\overline\Omega}F^\top(z,\cdot)\wedge([\dbar,\Ec]f)^\bot+F^\bot(z,\cdot)\wedge([\dbar,\Ec]f)^\top 
        \\
        =&\sum_{|\alpha|\le k}\int_{\Uc\backslash\overline\Omega}F^\top(z,\cdot)\wedge D^\alpha\Sc^{k,\alpha}\big[([\dbar,\Ec]f)^\bot\big]+F^\bot(z,\cdot)\wedge D^\alpha\Sc^{k,\alpha}\big[([\dbar,\Ec]f)^\top\big]          \\
        =&\sum_{|\alpha|\le k}(-1)^{|\alpha|}\int_{\Uc\backslash\overline\Omega}D_\zeta^\alpha( F^\top)(z,\cdot)\wedge\Sc^{k,\alpha}\big[([\dbar,\Ec]f)^\bot\big]+D_\zeta^\alpha (F^\bot)(z,\cdot)\wedge\Sc^{k,\alpha}\big[([\dbar,\Ec]f)^\top\big]
        \\
        =&\sum_{|\alpha|\le k}(-1)^{|\alpha|}\Big(\Fc^{\alpha,\top}\Sc^{k,\alpha}[\dbar,\Ec]^\bot+\Fc^{\alpha,\top}\Sc^{k,\alpha}[\dbar,\Ec]^\top\Big)[f].
                 \end{align*}
    Here we use $[\dbar,\Ec]^{(\top,\bot)} f:=([\dbar,\Ec] f)^{(\top,\bot)}$.

    Note that by Proposition~\ref{Prop::Technical}~\ref{Item::Technical::TangComm} $[\dbar,\Ec]^\top:H^{s,p}(\Omega)\to\widetilde H^{s-1/m,p}(\overline\Omega)$ is bounded. On the other hand, since  $[\dbar,\Ec]:H^{s,p}(\Omega)\to\widetilde H^{s-1,p}(\overline{\Uc\backslash \Omega})$ is clearly bounded  and   $[\dbar,\Ec]^\bot=[\dbar,\Ec]-[\dbar,\Ec]^\top$, we have the boundedness  $[\dbar,\Ec]^\bot: H^{s,p}(\Omega)\to\widetilde H^{s-1,p}(\overline{\Uc\backslash \Omega})$. 
    Making use of those, together with the boundedness for $\Sc^{k,\alpha} $ in Proposition \ref{Prop::Technical}~\ref{Item::Technical::Antidev}, as well as   for $\Fc^{\alpha,\top}$ and $\Fc^{\alpha,\bot} $ in \eqref{Eqn::FOpBdd}, we  apply the following composition arguments:  for every $|\alpha|\le k$
    \begin{align*}
        \Fc^{\alpha,\top}\Sc^{k,\alpha}[\dbar,\Ec]^\bot:&H^{s,p}(\Omega)\xrightarrow{[\dbar,\Ec]^\bot}\widetilde H^{s-1,p}(\overline{\Uc\backslash\Omega})\xrightarrow{\Sc^{k,\alpha}}\widetilde H^{s-1+k,p}(\overline{\Uc\backslash\Omega})\xrightarrow
        {\Fc^{\alpha,\top}}H^{s,p}(\Omega);
        \\
        \Fc^{\alpha,\bot}\Sc^{k,\alpha}[\dbar,\Ec]^\top:&H^{s,p}(\Omega)\xrightarrow{[\dbar,\Ec]^\top}\widetilde H^{s-1/m,p}(\overline{\Uc\backslash\Omega})\xrightarrow{\Sc^{k,\alpha}}\widetilde H^{s-1/m+k,p}(\overline{\Uc\backslash\Omega})\xrightarrow
        {\Fc^{\alpha,\bot}}H^{s,p}(\Omega).
    \end{align*}
    Taking sums over $\alpha$ we complete the proof.
\end{proof}

\begin{ack}
    The authors would like to thank Song-Ying Li for some helpful discussion.
\end{ack}
\bibliographystyle{amsalpha}
\bibliography{references} 
\end{document}